\documentclass[runningheads]{llncs}

\usepackage[table]{xcolor}
\definecolor{lightgray}{gray}{0.9}
\usepackage{hyperref}

\usepackage{amssymb, amsmath}
\usepackage{mathrsfs}
\usepackage{cite}
\usepackage{enumitem}
\usepackage{cases}
\usepackage[font={it}, margin=1cm]{caption}
\usepackage{verbatim}
\usepackage{tikz}
\usepackage{tikz-cd} 
\usetikzlibrary{decorations.markings, decorations.pathreplacing, positioning,arrows, matrix, decorations.pathmorphing, shapes.geometric, calc}
\usetikzlibrary{backgrounds, decorations}
 
\newtheorem{thmletter}{Theorem}


\newcommand{\inj}{\operatorname{inj}}

\newcommand{\PCP}{\operatorname{PCP}}
\newcommand{\PCPI}{\operatorname{PCP}^{\inj}}
\newcommand{\PCPnoI}{\operatorname{PCP}^{(\neg\inj, \neg\inj)}}
\newcommand{\PCPoneI}{\operatorname{PCP}^{(\neg\inj, \inj)}}
\newcommand{\PCPtwoI}{\operatorname{PCP}^{(\inj, \inj)}}
\newcommand{\GPCP}{\operatorname{GPCP}}
\newcommand{\GPCPI}{\operatorname{GPCP}^{\inj}}
\newcommand{\GPCPnoI}{\operatorname{GPCP}^{(\neg\inj, \neg\inj)}}
\newcommand{\GPCPoneI}{\operatorname{GPCP}^{(\neg\inj, \inj)}}
\newcommand{\GPCPtwoI}{\operatorname{GPCP}^{(\inj, \inj)}}
\newcommand{\ELPCP}{\operatorname{PCP}_{\mathcal{EL}}}
\newcommand{\ELPCPI}{\operatorname{PCP}_{\mathcal{EL}}^{\inj}}
\newcommand{\ELPCPnoI}{\operatorname{PCP}_{\mathcal{EL}}^{(\neg\inj, \neg\inj)}}
\newcommand{\ELPCPoneI}{\operatorname{PCP}_{\mathcal{EL}}^{(\neg\inj, \inj)}}
\newcommand{\ELPCPtwoI}{\operatorname{PCP}_{\mathcal{EL}}^{(\inj, \inj)}}
\newcommand{\CPCP}{\operatorname{PCP}_{\mathcal{R}}}
\newcommand{\CPCPI}{\operatorname{PCP}_{\mathcal{R}}^{\inj}}
\newcommand{\CPCPoneI}{\operatorname{PCP}_{\mathcal{R}}^{(\neg\inj, \inj)}}
\newcommand{\CPCPtwoI}{\operatorname{PCP}_{\mathcal{R}}^{(\inj, \inj)}}
\newcommand{\GPCPC}{\operatorname{GPCP}^{\operatorname{CI}}}
\newcommand{\GPCPCI}{\operatorname{GPCP}^{\inj+\operatorname{CI}}}
\newcommand{\GPCPConeI}{\operatorname{GPCP}^{(\neg\inj, \inj)+\operatorname{CI}}}
\newcommand{\GPCPCtwoI}{\operatorname{GPCP}^{(\inj, \inj)+\operatorname{CI}}}
\newcommand{\PCPC}{\operatorname{PCP}^{\operatorname{CI}}}
\newcommand{\PCPCI}{\operatorname{PCP}^{\inj+\operatorname{CI}}}
\newcommand{\AEP}{\operatorname{AEP}}
\newcommand{\BP}{\operatorname{BP}}
\newcommand{\BPoneI}{\operatorname{BP}^{(\neg\inj, \inj)}}
\newcommand{\BPtwoI}{\operatorname{BP}^{(\inj, \inj)}}
\newcommand{\RPoneI}{\operatorname{RP}^{(\neg\inj, \inj)}}
\newcommand{\RPtwoI}{\operatorname{RP}^{(\inj, \inj)}}
\newcommand{\AEPoneIb}{\operatorname{BP}^{(\neg\inj, \inj)}}
\newcommand{\AEPtwoIb}{\operatorname{BP}^{(\inj, \inj)}}
\newcommand{\ERP}{\operatorname{RP}}
\newcommand{\rk}{\operatorname{rk}}
\newcommand{\eq}{\operatorname{Eq}}
\newcommand{\im}{\operatorname{Im}}
\newcommand{\fix}{\operatorname{Fix}}

\begin{document}
\title{Variations on the Post Correspondence Problem for free groups\thanks{Supported by EPSRC grant EP/R035814/1}}
%
\author{Laura Ciobanu
\orcidID{0000-0002-9451-1471} \and
Alan D. Logan
\orcidID{0000-0003-1767-6798}}
\authorrunning{L. Ciobanu and A. D. Logan}
%
\institute{Heriot-Watt University,  Edinburgh EH14 4AS,
 Scotland\\
\email{\{L.Ciobanu,A.Logan\}@hw.ac.uk}}

\maketitle

\begin{abstract}
The Post Correspondence Problem is a classical decision problem about equalisers of free monoid homomorphisms.
We prove connections between several variations of this classical problem, but in the setting of free groups and free group homomorphisms.
Among other results, and working under certain injectivity assumptions, we prove that computing the rank of the equaliser of a pair of free group homomorphisms can be applied to computing a basis of this equaliser, and also to solve the ``generalised'' Post Correspondence Problem for free groups.

\keywords{Post Correspondence Problem, free group, rational constraint.}
\end{abstract}

\section{Introduction}
\label{introduction}
In this article we connect several variations of the classical Post Correspondence Problem in the setting of free groups.
The problems we consider have been open since the 1980s, and understanding how they relate and compare to their analogues in free monoids could bring us closer to their resolution.
All problems are defined in Table \ref{table:definitions}, while their status in free groups and monoids is given in Table \ref{table:results}. However, three of these problems deserve proper introductions.

The first problem we consider is the Post Correspondence Problem ($\PCP$) for free groups.
This is completely analogous to the classical Post Correspondence Problem, which is about free monoids rather than free groups, and has had numerous applications in mathematics and computer science \cite{Harju1997Morphisms}.
The $\PCP$ for other classes of groups has been successfully studied (see for example \cite[Theorem 5.8]{Myasnikov2014Post}), but it remains open for free groups, where it is defined as follows.
Let $\Sigma$ and $\Delta$ be two alphabets, let $g, h: F(\Sigma)\rightarrow F(\Delta)$ be two group homomorphisms from the free group over $\Sigma$ to the free group over $\Delta$, and
store this data in a four-tuple $I=(\Sigma, \Delta, g, h)$, called an \emph{instance} of the $\PCP$. The $\PCP$ is the decision problem:
\[
\textnormal{
Given $I=(\Sigma, \Delta, g, h)$, is there $x\in F(\Sigma)\setminus\{1\}$ such that $g(x)=h(x)$?}
\]
That is, if we consider the \emph{equaliser} $\eq(g, h)= \{x\in F(\Sigma)\mid g(x)=h(x)\}$ of $g$ and $h$, the $\PCP$ asks if $\eq(g, h)$ is non-trivial.
Determining the decidability of this problem is an important question \cite[Problem 5.1.4]{Dagstuhl2019} \cite[Section 1.4]{Myasnikov2014Post}.

Our second problem asks not just about the triviality of $\eq(g, h)$, but for a finite description of it.
We write $\PCPI$ (see Table \ref{table:definitions}) for the $\PCP$ with at least one map injective, in which case $\eq(g, h)$ is finitely generated \cite{Goldstein1986Fixed} and a finite description relates to bases:
The \emph{Basis Problem} ($\BP$) takes as input an instance $I=(\Sigma, \Delta, g, h)$ of the $\PCPI$ and outputs a basis for $\eq(g, h)$.
In Section \ref{sec:Stallings} we show that the $\BP$ is equivalent to the \emph{Rank Problem} ($\ERP$), which seeks the number of elements in the basis, and was asked by Stallings in 1984. Recent results settle the $\BP$ for certain classes of free group maps \cite{Bogopolski2016algorithm, Feighn2018algorithmic, ciobanu2020fixed, ciobanu_et_al:LIPIcs:2020:12527}, but despite this progress its solubility remains open in general. The analogous problem for free monoids, which we call the \emph{Algorithmic Equaliser Problem} ($\AEP$) (see \cite[page 2]{ciobanu_et_al:LIPIcs:2020:12527}) because it aims to describe the equaliser in terms of automata rather than bases, is insoluble \cite[Theorem 5.2]{saarela2010noneffective}.

Our third problem is the \emph{generalised} Post Correspondence Problem ($\GPCP$), which is an important generalisation of the $\PCP$ for both free groups and monoids from 1982 \cite{Ehrenfeucht1982generalized}.
For group homomorphisms $g, h: F(\Sigma)\rightarrow F(\Delta)$ and fixed elements $u_1, u_2, v_1, v_2$ of $F(\Delta)$,
an instance of the $\GPCP$ is an $8$-tuple $I_{\GPCP}=(\Sigma, \Delta, g, h, u_1, u_2, v_1, v_2)$ and the $\GPCP$ itself is the decision problem:
\begin{align*}
&\textnormal{
Given $I_{\GPCP}=(\Sigma, \Delta, g, h, u_1, u_2, v_1, v_2)$,}\\
&\textnormal{is there $x\in F(\Sigma)\setminus\{1\}$ such that $u_1g(x)u_2=v_1h(x)v_2$?}
\end{align*}

\vspace{-1cm}
\begin{table}[ht]
\caption{Summary of certain decision problems related to the $\PCP$}
\label{table:definitions}
\begin{center}
\resizebox{\columnwidth}{!}{%
\rowcolors{7}{}{lightgray}
\begin{tabular}{l|l}
 & Fixed: finite alphabets $\Sigma$ and $\Delta$ and free groups $F(\Sigma)$, $F(\Delta)$.\\
Problems & Input: homomorphisms $g, h: F(\Sigma)\rightarrow F(\Delta)$\\
(for free groups) &\\
 & Additional input for GPCP: $u_1, u_2, v_1, v_2 \in F(\Delta)$\\
 & Additional input for $\CPCP$: rational set $\mathcal{R}\subseteq F(\Sigma)$\\
 & Additional input for $\ELPCP$: $a, b \in \Sigma^{\pm 1}$, $\Omega \subset \Sigma$\\
\hline
 \hline

 & Is it decidable whether:\\
$\PCP$ & there exists $x\in F(\Sigma)\setminus\{1\}$ s.t. $g(x)=h(x)$?\\
$\GPCP$ & there exists $x\in F(\Sigma)\setminus\{1\}$ s.t. $u_1g(x)u_2=v_1h(x)v_2$?\\
$\CPCP$ & there exists $x\neq 1$ in $\mathcal{R}$ s.t. $g(x)=h(x)$? \\
$\ELPCP$ & there exists $x\in F(\Sigma)\setminus\{1\}$ s.t. $g(x)=h(x)$ and \\
& $x$ decomposes as a freely reduced word $ayb$ for some $y\in F(\Omega)$\\
\hline
$\PCPnoI$ & $\PCP$ with neither $g$, nor $h$ injective \\
$\PCPoneI$ & $\PCP$ with exactly one of $g, h$ injective \\
$\PCPtwoI$ & $\PCP$ with both $g, h$ injective\\
$\PCPI$ & $\PCPoneI \cup \PCPtwoI$ (i.e. $\PCP$ with at least one of $g, h$ injective)\\
$\PCPC$ & $\PCP$ with $g,h$ s.t. $g(y) \neq u^{-1}h(y)u$ for all $u\in F(\Delta), y \in F(\Sigma)\setminus\{1\}$\\
$\PCPCI$ & $\PCPI\cup\PCPC$\\
$\PCP(n)$& $\PCP(n)$ for alphabet size $|\Sigma|=n$\\
\hline
$\GPCP$ variants
& $\GPCPnoI$, $\GPCPI$, $\GPCPCI$, $\GPCP(n)$, etc.\\
& analogue to $\PCP$ variants\\ 
\end{tabular}%
}
\end{center}
\end{table}

For free monoids, the $\PCP$ is equivalent to the $\GPCP$ \cite[Theorem 8]{Harju1997Morphisms}. The corresponding connection for free groups is more complicated, and explaining this connection is the main motivation of this article.
In particular, the $\GPCP$ for free groups is known to be undecidable \cite[Corollary 4.2]{Myasnikov2014Post} but this proof does not imply that the $\PCP$ for free groups is undecidable (because of injectivity issues; see Section \ref{sec:NonInjective}).
In Theorem \ref{thm:mainBODY} we connect the $\PCP$ with the $\GPCP$ via a sequence of implications, and require at least one map to be injective.

\subsubsection{Main theorem}
Theorem \ref{thm:connections} summarises the connections proven in this paper (arrows are labeled by the section numbers where the implications are proven), and Section \ref{sec:MainTwo} brings all the results together.
Note that asking for both maps to be injective refines the results in this theorem, as does restricting the size of the source alphabet $\Sigma$ (see Theorem \ref{thm:connectionsBODY}).

\begin{thmletter}[Theorem \ref{thm:connectionsBODY}]
\label{thm:connections}
In finitely generated free groups the following implications hold.
\[
\begin{tikzcd}
\operatorname{Rank \ Problem\ } (\ERP)\\
\operatorname{Basis \ Problem\ } (\BP)
\arrow[Rightarrow, "\ref{sec:MainOne}"]{r} \arrow[Rightarrow, "\ref{thm:rationalBODY}"]{d} \arrow[Leftrightarrow, "\ref{sec:Stallings}"]{u}
&
\GPCPI \arrow[Rightarrow, "\ref{sec:MainOne}"]{r}
&
\PCP\arrow[Rightarrow, "\ref{sec:conjIn}"]{r}
&
\GPCPCI\\
\CPCPI
\end{tikzcd}
\]
\end{thmletter}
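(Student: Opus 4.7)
Theorem~A packages into a single diagram the main reductions proved throughout the paper, so the plan is to establish each arrow by the strategy indicated in the referenced section; here I sketch the approach and flag the main difficulties.

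For the equivalence $\ERP \Leftrightarrow \BP$, the direction $\BP \Rightarrow \ERP$ is immediate by counting basis elements. The converse is the substantive half: the plan is an enumeration argument on $\eq(g,h)$, which is finitely generated under the injectivity hypothesis of $\PCPI$. I would enumerate the (decidable) set of equaliser elements, add them one at a time to a growing generating set, and track the rank of the resulting subgroup via Stallings graphs and Nielsen reduction; once this rank matches the value $r$ returned by $\ERP$, the generated subgroup must coincide with $\eq(g,h)$, and a Nielsen-reduced basis can be read off the associated Stallings graph.

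For $\BP \Rightarrow \GPCPI$, the idea is to adjoin two boundary letters $s,t$ to $\Sigma$ and extend $g,h$ to homomorphisms $g',h'$ so that $g'(sxt)=h'(sxt)$ reproduces the GPCP equation $u_1g(x)u_2 = v_1h(x)v_2$. Given a basis of $\eq(g',h')$ from $\BP$, one forms its Stallings graph and tests whether the subgroup meets the rational set $s F(\Sigma) t$, which is decidable by the standard pullback construction. The same Stallings-graph intersection argument, now against an arbitrary rational $\mathcal{R}$, delivers $\BP \Rightarrow \CPCPI$. I expect the main obstacle in both reductions will be arranging the extensions $g',h'$ so they remain in the injective class required by $\BP$; this may force us to treat sub-cases separately depending on which of $g$, $h$ is assumed injective.

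Finally, $\GPCPI \Rightarrow \PCP$ will specialise the boundary data to the identity and then reduce non-injective $\PCP$ instances to injective ones, which is the content of Section~\ref{sec:NonInjective}. The reverse-style arrow $\PCP \Rightarrow \GPCPCI$ is the hardest step: the standard padding trick that reduces $\GPCP$ to $\PCP$ in free monoids fails in free groups because free cancellation introduces a conjugacy ambiguity in $F(\Delta)$, and the conjugacy-independence assumption $\operatorname{CI}$ is introduced to rule out precisely this ambiguity. The main technical point to verify will be that $\operatorname{CI}$ (or injectivity) is exactly what is needed to force the monoid-style reduction through, so that any GPCP witness produced by the padded instance necessarily comes from a genuine PCP witness of the given data.
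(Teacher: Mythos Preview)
Your plan for every arrow except $\ERP\Rightarrow\BP$ is essentially the paper's: the padding with boundary letters, the Stallings-graph intersection with a rational set, the specialisation of the $\GPCP$ constants, and the use of conjugacy inequivalence to kill the spurious $BxB^{-1}$ and $E^{-1}xE$ solutions are exactly the mechanisms in Sections~\ref{sec:injectivity}--\ref{sec:conjIn}. Your flagged worry about preserving injectivity when passing to $g',h'$ is handled cleanly (the images gain two free factors from the new letters), so that turns out not to be an obstacle.

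The genuine gap is in your halting criterion for $\ERP\Rightarrow\BP$. You propose to enumerate elements of $\eq(g,h)$, grow a subgroup $H$, and stop once $\rk(H)=r$. But a free group of rank $r$ contains many proper subgroups of rank $r$ (e.g.\ $\langle a^2,b\rangle\lneq\langle a,b\rangle$), so hitting rank $r$ does \emph{not} force $H=\eq(g,h)$; your procedure may terminate prematurely and output a basis for a proper subgroup. Nothing in the sketch rules this out, and no choice of enumeration order repairs it without further input.

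The paper avoids this by working not with abstract generating sets but with the Goldstein--Turner \emph{derived graph} $D_\phi$ of $\phi=h\circ g^{-1}$: one grows a finite ball in $D_\phi$ around the basepoint, prunes degree-$1$ vertices, and stops when the Euler-characteristic quantity $|E|-|V|$ reaches $r-1$. The point is that $\fix(\phi)$ is carried by a \emph{unique} core subgraph $\Gamma_\phi\subset D_\phi$, and every proper core subgraph of $\Gamma_\phi$ has strictly smaller $|E|-|V|$; so the geometric stopping rule is sound where the algebraic one is not. To fix your argument you would need either to replace the rank test by this graph-theoretic one, or to supply an independent certificate that your candidate $H$ actually equals $\eq(g,h)$, which is precisely what is not available.
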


This theorem implies that Stallings' Rank Problem is of central importance, as we have the chain: $\ERP \Rightarrow\GPCPI\Rightarrow\PCP$.

\subsubsection{Rational constraints}
The implication $\BP\Rightarrow\GPCPI$ above is surprising because the inputs to the problems are very different.
The proof of this implication uses the $\PCPI$ with a certain rational constraint, namely the problem $\ELPCPI$ (see Table \ref{table:definitions}). The relationship between the $\GPCP$ and the $\ELPCP$ still holds if neither map is injective.
As the $\GPCP$ for free groups is undecidable in general, this connection yields the following result.

\begin{thmletter}[Theorem \ref{thm:rationalundecidableBODY}]
\label{thm:rationalundecidable}
There exists a rational constraint $\mathcal{R}$ such that the $\CPCP$ is undecidable.
\end{thmletter}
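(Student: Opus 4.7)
The plan is to reduce the $\GPCP$ for free groups, which is undecidable by \cite[Corollary 4.2]{Myasnikov2014Post}, to the $\ELPCP$, and then observe that $\ELPCP$ is exactly the special case of $\CPCP$ whose rational constraint has the form $aF(\Omega)b$. Since the $\GPCP$ statement carries no injectivity hypothesis on $g$ or $h$, this reduction must work without injectivity assumptions, unlike the one invoked in the proof of Theorem \ref{thm:rationalBODY}; but the construction itself is completely parallel.

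For the reduction, given an instance $(\Sigma, \Delta, g, h, u_1, u_2, v_1, v_2)$ of the $\GPCP$, I would introduce two new generators $a, b$ not in $\Sigma$, set $\Sigma' = \Sigma \sqcup \{a, b\}$, and extend $g, h$ to $g', h' : F(\Sigma') \to F(\Delta)$ by declaring $g'(a) = u_1$, $g'(b) = u_2$, $h'(a) = v_1$, $h'(b) = v_2$. With $\Omega = \Sigma$, the $\ELPCP$ instance $(\Sigma', \Delta, g', h', a, b, \Omega)$ then asks for a freely reduced $x' = ayb$ with $y \in F(\Sigma)$ satisfying $g'(x') = h'(x')$, which unwinds to $u_1 g(y) u_2 = v_1 h(y) v_2$. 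Thus solutions of the $\ELPCP$ instance correspond to elements $y \in F(\Sigma)$ (possibly including $y = 1$) satisfying the $\GPCP$ equation.

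The rational set at work here is $\mathcal{R} = a F(\Sigma) b \subseteq F(\Sigma')$, which is rational because $F(\Sigma)$ is a finitely generated subgroup of $F(\Sigma')$ and rational subsets of free groups are closed under concatenation. Hence the $\ELPCP$ instance above is literally a $\CPCP$ instance with constraint $\mathcal{R}$, and letting the $\GPCP$ input vary yields a family of rational constraints for which the $\CPCP$ is undecidable.

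The only obstacle, and a minor one, is the boundary case $y = 1$, which produces the spurious $\ELPCP$ solution $x' = ab$ exactly when $u_1 u_2 = v_1 v_2$, but does not correspond to any bona fide $\GPCP$ solution (since $\GPCP$ excludes $x = 1$). I would handle this by prefacing the reduction with the decidable check of whether $u_1 u_2 = v_1 v_2$ holds in $F(\Delta)$; combining that check with a hypothetical decision procedure for $\CPCP$ would then decide $\GPCP$, contradicting its undecidability and hence establishing the theorem.
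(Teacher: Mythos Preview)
Your reduction is correct and takes a genuinely simpler route than the paper's. The paper reduces $\GPCPnoI$ to $\ELPCPnoI$ via the construction of Section~\ref{sec:injectivity} (Lemmas~\ref{lem:injectivity} and~\ref{lem:solnforGPCP}), which adjoins letters $B, E$ to $\Sigma$ and $B, E, \#$ to $\Delta$, and twists the images of $g$ and $h$ by powers of $\#$. That more elaborate construction is engineered to preserve injectivity (Lemma~\ref{lem:injectivity}), a feature that is irrelevant for the present theorem but essential elsewhere in the paper (Theorems~\ref{thm:GPCP} and~\ref{thm:PCPvsGPCP2}). Your direct extension $g'(a)=u_1$, $g'(b)=u_2$, $h'(a)=v_1$, $h'(b)=v_2$ does not preserve injectivity---for instance $g'$ is never injective when $u_1=1$---but it gives a perfectly adequate and cleaner reduction here, since Theorem~\ref{thm:rationalundecidable} carries no injectivity hypothesis.

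One repair is needed: your treatment of the boundary case $y=1$ does not work as written. When $u_1u_2=v_1v_2$, your $\CPCP$ instance acquires the spurious solution $ab$, and merely \emph{detecting} this equality does not let you decide the $\GPCP$ instance---you still do not know whether a non-trivial $y$ exists. The cleanest fix is to replace the constraint by $\mathcal{R}=a\bigl(F(\Sigma)\setminus\{1\}\bigr)b$, which is still rational and gives an exact correspondence with $\GPCP$ solutions. Alternatively, you may observe that in the specific undecidable family of Lemma~\ref{lem:PCPinj} one has $u_1u_2=w$ and $v_1v_2=1$, so the boundary case $w=1$ in $F(\Delta)$ forces the $\GPCP$ equation to read $g(x)=h(x)$; since both maps there are non-injective, Lemma~\ref{lem:CommonKer} guarantees a non-trivial solution, and the $\GPCP$ answer is already ``yes''.
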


\subsubsection{Random maps and generic behaviour.}
A different perspective on the $\PCP$ and its variations is to consider the behaviour of these problems when the pairs of homomorphisms are picked randomly (while the two alphabets $\Delta=\{x_1, \dots, x_m\}$ and $\Sigma=\{y_1, \dots, y_k\}$, and ambient free groups $F(\Delta)$ and $F(\Sigma)$ remain fixed). Any homomorphism is completely determined by how it acts on the generators, and so picking $g$ and $h$ randomly is to be interpreted as picking $(g(x_1), \dots, g(x_m))$ and $(h(x_1), \dots, h(x_m))$ as random tuples of words in $F(\Sigma)$ (see Section \ref{sec:CI} for details). There is a vast literature (see for example \cite{KMSS2003}) on the types of objects or behaviours which appear with probability $0$, called \emph{negligible}, or with probability $1$, called \emph{generic}, in infinite groups. In this spirit, the \emph{generic} $\PCP$ refers to the $\PCP$ applied to a generic set (of pairs) of maps, that is, a set of measure $1$ in the set of all (pairs of) maps, and we say that the generic $\PCP$ is decidable if the $\PCP$ is decidable for `almost all' instances, that is, for a set of measure $1$ of pairs of homomorphisms.

In Section \ref{sec:CI} we describe the setup used to count pairs of maps and compute probabilities, and show that among all pairs of maps $g,h$, the property of being \emph{conjugacy inequivalent} (that is, for every $u\in F(\Delta)$ there is no $x\neq 1$ in $F(\Sigma)$ such that $g(x)=u^{-1}h(x)u$; defined in Table \ref{table:definitions} as $\PCPC$ and $\GPCPC$) occurs with probability $1$; that is, conjugacy inequivalent maps are generic:

\begin{thmletter}[Theorem \ref{thm:genericityBODY}]
\label{thm:genericity}
Instances of the $\PCPCI$ are generic instances of the $\PCP$. That is, with probability $1$, a pair of maps is conjugacy inequivalent.
\end{thmletter}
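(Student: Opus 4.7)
The plan is to fix the counting setup described in Section~\ref{sec:CI}, in which a pair $(g,h)$ is encoded by the $2k$-tuple $(a_1,\dots,a_k,b_1,\dots,b_k)\in F(\Delta)^{2k}$ with $a_i=g(y_i)$ and $b_i=h(y_i)$, and then to deduce the theorem from two density-one properties of random $2k$-tuples in a free group of rank at least two: (i) that $(a_1,\dots,a_k,b_1,\dots,b_k)$ is a basis of a free subgroup $H$ of rank $2k$ of $F(\Delta)$; and (ii) that $H$ is malnormal in $F(\Delta)$. Property (i) is the standard density calculation that freely reduced random tuples generically exhibit no Nielsen cancellation, and (ii) is Arzhantseva's genericity of malnormality for finitely generated subgroups of free groups.

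Given a pair $(g,h)$ satisfying (i) and (ii), I would prove conjugacy inequivalence by a short algebraic argument. Suppose for contradiction that $g(x)=u^{-1}h(x)u$ for some $x\in F(\Sigma)\setminus\{1\}$ and some $u\in F(\Delta)$. Since $(a_1,\dots,a_k)$ and $(b_1,\dots,b_k)$ are subsequences of a basis, both $g$ and $h$ are injective, so $g(x)\neq 1$ and $h(x)\neq 1$. The elements $g(x),h(x)\in H$ are conjugate in $F(\Delta)$ via $u$; since $uHu^{-1}\cap H$ contains the nontrivial element $ug(x)u^{-1}=h(x)$, the malnormality of $H$ forces $u\in H$. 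Hence the conjugacy takes place inside the free group $H=F(a_1,\dots,a_k,b_1,\dots,b_k)$ of rank $2k$. But $g(x)$ is a non-trivial word only in the letters $a_i^{\pm 1}$ while $h(x)$ is a non-trivial word only in the letters $b_j^{\pm 1}$, so their cyclic reductions use disjoint elements of the basis of $H$ and cannot be cyclic permutations of one another; this contradicts their conjugacy in $H$.

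I expect the algebraic step to be essentially free, so the main obstacle is importing (i) and (ii) in exactly the asymptotic-density framework adopted in Section~\ref{sec:CI}; some care is needed because conjugacy inequivalence is an existential statement over $(x,u)$ and a naive union bound over $x\in F(\Sigma)\setminus\{1\}$ would require controlling infinitely many $u\in F(\Delta)$ simultaneously, whereas the malnormality route above handles all such $u$ in one step. Once (i) and (ii) are established as density-one events, the set of pairs violating them has density zero, so generically $(g,h)\in\PCPC\subseteq\PCPCI$, as required.
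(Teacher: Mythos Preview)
Your argument is correct but takes a genuinely different route from the paper's. The paper does not combine the two images into a single subgroup; instead it cites \cite{MTV} for generic injectivity of each map separately (claim~(1)), and then invokes \cite[Theorem~1]{CMV} to show that generically the two image subgroups $L=\im(g)$ and $K=\im(h)$ have \emph{trivial pullback}, meaning $L^u\cap K=\{1\}$ for every $u\in F(\Delta)$ (claim~(2)). That is already the conjugacy-separation statement, so conjugacy inequivalence follows with no further algebra.

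Your approach instead packages both images into the single subgroup $H=\langle a_1,\dots,a_k,b_1,\dots,b_k\rangle$, uses that the full $2k$-tuple is generically a basis (giving injectivity of both maps and making $\im(g)$ and $\im(h)$ complementary free factors of $H$), and then appeals to Arzhantseva's generic malnormality of $H$ to force any conjugator $u$ into $H$, after which the free-factor observation finishes things. This is a clean alternative: malnormality handles the quantifier over $u$ in one stroke, and the algebraic coda is immediate. The trade-off is that the paper's cited pullback result is exactly the conjugacy-separation statement needed, with nothing further to argue, whereas your route requires the extra (easy) step inside $H$; on the other hand, your ingredients (random tuples are bases; random subgroups are malnormal) are older and more widely known than the specific pullback theorem the paper cites.
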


Theorem \ref{thm:genericity} shows that the implication $\PCP\Rightarrow\GPCPCI$ in Theorem \ref{thm:connections} is the generic setting, and hence for `almost all maps' we have $\PCP\Leftrightarrow\GPCP$.

We conclude the introduction with a summary of the status the $\PCP$ and its variations for free monoids and groups. We aim to study the computational complexity of these problems and how this complexity behaves with respect to the implications proved in this paper in future work.

\vspace{-0.5cm}
\begin{table}[ht]
\caption{Status of results for free monoids and free groups}
\label{table:results}
\begin{center}
\resizebox{\columnwidth}{!}{%
\rowcolors{3}{}{lightgray}
\begin{tabular}{l | c | c | c | c }
Problems & In free monoids& References & In free groups& References \\
 & & for free monoids&& for free groups\\
\hline
 general $\PCP$ & undecidable &\cite{Post1946Correspondence} &unknown &\\
general $\AEP / \BP$ & undecidable & \cite[Theorem 5.2]{saarela2010noneffective} & unknown &  \\
$\PCPnoI$ & undecidable & \cite{Post1946Correspondence} & decidable & Lemma \ref{lem:CommonKer}\\
$\PCPI$ & undecidable & \cite{Lecerf63} & unknown &\\
$\GPCP$ &undecidable & \cite[Theorem 8]{Harju1997Morphisms} & undecidable &\cite[Corollary 4.2]{Myasnikov2014Post}\\
$\GPCPnoI$ &undecidable & \cite[Theorem 8]{Harju1997Morphisms} & undecidable & Lemma \ref{lem:PCPinj}\\
$\GPCPI$ &undecidable & \cite{Lecerf63} & unknown & \\
$\GPCPCI$ &N/A & & unknown &  \\
$\CPCP$ & undecidable & \cite{Post1946Correspondence} & undecidable &  Theorem~\ref{thm:rationalundecidable}\\
$\PCPCI$ & N/A & & unknown & \\
generic $\PCP$& decidable & \cite[Theorem 4.4]{GMMU}& decidable &  \cite{CMV}\\
\end{tabular}%
}
\end{center}
\end{table}
\section{Non-injective maps}
\label{sec:NonInjective}
In this section we investigate the $\PCPnoI$ and $\GPCPnoI$, which are the $\PCP$ and $\GPCP$ under the assumption that both maps are non-injective.

\subsubsection{The \boldmath{$\PCP$} for non-injective maps}
We first prove that the $\PCPnoI$ is trivially decidable, with the answer always being ``yes''.

\begin{lemma}
\label{lem:CommonKer}
If $g, h: F(\Sigma)\rightarrow F(\Delta)$ are both non-injective homomorphisms then $\eq(g, h)$ is non-trivial.
\end{lemma}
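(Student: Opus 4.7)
The plan is to prove the stronger statement that $\ker(g)\cap\ker(h)$ itself is non-trivial, which immediately implies $\eq(g,h)$ is non-trivial since $\ker(g)\cap\ker(h)\subseteq\eq(g,h)$. So the task reduces to showing that two non-trivial normal subgroups $N_1=\ker(g)$ and $N_2=\ker(h)$ of a free group must intersect non-trivially.

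First I would dispense with the degenerate case $|\Sigma|=1$: any homomorphism $\mathbb{Z}\to F(\Delta)$ is determined by the image of a generator, and since $F(\Delta)$ is torsion-free it is non-injective only if trivial; then $g=h=1$ and $\eq(g,h)=F(\Sigma)$. So assume $|\Sigma|\geq 2$, i.e.\ $F(\Sigma)$ is non-abelian. Pick any non-trivial $a\in N_1$ and $b\in N_2$. The key observation is that, by normality, both $a^{-1}$ and $b^{-1}ab$ lie in $N_1$, hence $[a,b]=a^{-1}b^{-1}ab\in N_1$, and symmetrically $[a,b]\in N_2$. So $[a,b]\in N_1\cap N_2$, and if $[a,b]\neq 1$ we are done.

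The remaining case is $[a,b]=1$. Here I would use the standard fact that commuting elements of a free group lie in a common maximal cyclic subgroup $\langle r\rangle$, so $a,b\in\langle r\rangle$. Since $F(\Sigma)$ is non-abelian, choose any $x\in F(\Sigma)\setminus\langle r\rangle$, and consider $xax^{-1}\in N_1$ (using normality again). The main technical step is to show $xax^{-1}\notin\langle r\rangle$: if it were, writing $a=r^k$ and $xax^{-1}=r^j$ gives $(xrx^{-1})^k=r^j$, which by uniqueness of roots in free groups forces $xrx^{-1}=r^{\pm 1}$, and since maximal cyclic subgroups in free groups are self-normalising this contradicts $x\notin\langle r\rangle$. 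Thus $xax^{-1}$ does not commute with $b$, so $[xax^{-1},b]$ is a non-trivial element of $N_1\cap N_2$.

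The main obstacle is the commuting case, handled through the self-normalising property of maximal cyclic subgroups in free groups; everything else is a direct application of normality. The overall structure is very short, and the whole argument should fit in a few lines once the well-known root-and-centraliser facts for free groups are quoted.
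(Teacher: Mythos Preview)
Your proof is correct and follows the same overall strategy as the paper: show $\ker(g)\cap\ker(h)$ is non-trivial, and do so via the commutator $[a,b]$ of non-trivial elements $a\in\ker(g)$, $b\in\ker(h)$, noting that $[a,b]$ lies in both kernels. The only divergence is in the commuting case $[a,b]=1$. Where you conjugate $a$ out of the common maximal cyclic subgroup $\langle r\rangle$ to manufacture a non-commuting pair (invoking uniqueness of roots and the self-normalising property of maximal cyclic subgroups), the paper argues more directly: writing $a=r^k$ and $b=r^l$, the element $r^{kl}$ is non-trivial and satisfies $g(r^{kl})=g(a)^l=1$ and $h(r^{kl})=h(b)^k=1$, so $r^{kl}\in\ker(g)\cap\ker(h)$. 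This one-line observation also makes your separate treatment of $|\Sigma|=1$ unnecessary, since the common-power argument already covers the abelian case uniformly. Your route is perfectly valid, just heavier than needed.
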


\begin{proof}
We prove that $\ker(g)\cap\ker(h)$ is non-trivial,
which is sufficient.
Let $c\in \ker(g)$ and $d\in \ker(h)$ be non-trivial elements. If $\langle c,d\rangle \cong \mathbb{Z}=\langle x\rangle$, there exist integers $k,l$ such that $c=x^k$ and $d=x^l$. Then $g(x^{kl})=1=h(x^{kl})$ so $x^{kl}\in\ker(g)\cap\ker(h)$ with $x^{kl}$ non-trivial, as required. If $\langle c,d\rangle \ncong \mathbb{Z}$ then $g([c,d])=1=h([c,d])$, so $[c, d]\in\ker(g)\cap\ker(h)$ with $[c, d]$ non-trivial, as required.
\qed\end{proof}

As we can algorithmically determine if a free group homomorphism is injective (e.g. via Stallings' foldings), Lemma \ref{lem:CommonKer} gives us the following:

\begin{proposition}
\label{prop:PCPnoI}
$\PCP\Leftrightarrow\PCPnoI$
\end{proposition}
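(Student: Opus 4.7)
The plan is to reduce a general $\PCP$ instance to the $\PCPnoI$ case using Lemma \ref{lem:CommonKer} together with the algorithmic decidability of injectivity for free group homomorphisms, as foreshadowed in the paragraph preceding the statement. One direction is trivial as a restriction, so the substance lies in disposing of the non-injective case algorithmically and isolating the remaining inputs.

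Given an arbitrary instance $I = (\Sigma, \Delta, g, h)$, I would first apply Stallings' foldings to the image sets $\{g(s) : s \in \Sigma\}$ and $\{h(s) : s \in \Sigma\} \subseteq F(\Delta)$ to decide whether each of $g, h$ is injective; this is classical and effective, since a homomorphism is injective if and only if the folded graph has rank $|\Sigma|$ with the generators mapping to a free basis of their span. If both $g$ and $h$ are non-injective, then $I$ is a $\PCPnoI$ input, and by Lemma \ref{lem:CommonKer} the equaliser is nontrivial, so the output is ``yes''. An explicit witness is provided by the proof of the lemma: in the cyclic case $\langle c, d\rangle \cong \mathbb{Z} = \langle x \rangle$ with $c = x^k, d = x^l$, take $x^{kl}$; otherwise take the commutator $[c, d]$, where $c \in \ker g \setminus \{1\}$ and $d \in \ker h \setminus \{1\}$ are chosen from any nontrivial elements of the respective kernels (which Stallings foldings produce constructively when injectivity fails).

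The only non-routine step is the effectiveness of the injectivity test, which is standard; everything else is a case split on a decidable predicate together with Lemma \ref{lem:CommonKer}, already established. There is no genuine obstacle in the argument; the proposition functions as a packaging statement that isolates the hard core of the $\PCP$ away from the non-injective case, which has been trivially resolved.
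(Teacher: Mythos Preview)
Your argument is correct and matches the paper's one-sentence justification: test injectivity via Stallings' foldings; if both maps are non-injective, output ``yes'' by Lemma~\ref{lem:CommonKer}; otherwise the instance lies in $\PCPI$. Note that the proposition as printed is a typo and should read $\PCP\Leftrightarrow\PCPI$ (this is how it is invoked in Corollaries~\ref{corol:mainBODY} and~\ref{corol:ConjInequivalentBODY}, and the literal statement would settle the open $\PCP$ for free groups since $\PCPnoI$ is trivially decidable); your opening line inherits this slip, but the body of your argument establishes the intended equivalence.
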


\subsubsection{The \boldmath{$\GPCP$} for non-injective maps}
Myasnikov, Nikolaev and Ushakov defined the $\PCP$ and $\GPCP$ for general groups in \cite{Myasnikov2014Post}. Due to this more general setting their formulation is slightly different to ours but, from a decidability point of view, the problems are equivalent for free groups. They proved that the $\GPCP$ is undecidable for free groups; however, we now dig into their proof and observe that it assumes both maps are non-injective.
Therefore, $\GPCPI$ remains open.

\begin{lemma}
\label{lem:PCPinj}
The $\GPCPnoI$ is undecidable.
\end{lemma}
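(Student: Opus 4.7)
The plan is to re-open the proof of \cite[Corollary 4.2]{Myasnikov2014Post} and verify, step by step, that the pair of free group homomorphisms produced in their reduction is already a pair of \emph{non-injective} maps. So the work is diagnostic rather than constructive: no new reduction is needed, only a careful audit of the existing one. I would first locate the source of undecidability that they reduce from (typically the word problem for a finitely presented group with unsolvable word problem, or the free monoid $\PCP$), and then track how the source alphabet $\Sigma$ and target alphabet $\Delta$ are assembled from the data of that instance.

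The key inspection point is how the boundary words $u_1, u_2, v_1, v_2$ and the relators/constant symbols of the reduction source are encoded. In a typical reduction of this type one introduces padding generators in $\Sigma$ that $g$ and $h$ send to constant words of $F(\Delta)$, sometimes to the identity and sometimes to the same non-trivial element. I would read off, for each generator $y \in \Sigma$, the value $g(y)$ and $h(y)$, and then exhibit an explicit non-trivial element of $\ker(g)$ and, separately, of $\ker(h)$. This usually amounts to spotting one generator $y$ with $g(y)=1$, or two generators $y_1, y_2$ with $g(y_1)=g(y_2)$ so that $y_1 y_2^{-1}\in \ker(g)$, and doing the same computation for $h$. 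Doing this for both maps simultaneously delivers the claim that the instance constructed by their reduction lies in $\GPCPnoI$, and hence the undecidability of the $\GPCP$ demonstrated there already witnesses undecidability of the $\GPCPnoI$.

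The main obstacle I expect is purely bookkeeping: the construction in \cite{Myasnikov2014Post} is formulated for general groups, and as the excerpt notes, the conventions differ slightly from the ones adopted here. So some care is required in translating their homomorphisms, boundary words, and alphabets into our $(\Sigma, \Delta, g, h, u_1, u_2, v_1, v_2)$ format before the kernel elements can be exhibited. If at translation time one of the maps looks injective on the nose, a defensive fallback is to augment $\Sigma$ with a single additional generator $s$ and set $g(s)=h(s)=1$, which is harmless for the reduction: every old solution remains a solution, and conversely any new solution $x'$ projects (by killing $s$) to a candidate in $F(\Sigma)$ whose image under the original maps satisfies the boundary equation, so a separate decision on whether $u_1u_2=v_1v_2$ recovers an honest reduction. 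Either way, the conclusion is that the undecidability in \cite[Corollary 4.2]{Myasnikov2014Post} is witnessed by instances of $\GPCPnoI$.
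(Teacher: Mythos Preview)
Your plan is essentially what the paper does: it reproduces the construction from \cite[Proof of Proposition~4.1]{Myasnikov2014Post} (the instance $(\Sigma,\Delta,g,h,w,1,1,1)$ encoding the word problem for a finitely presented group with unsolvable word problem) and then observes directly that the relator-type generators $(R,1)$ and $(1,R^{-1})$ in $\Sigma$ force both $g$ and $h$ to be non-injective, exactly the kind of ``two generators with the same image'' witness you anticipated. Your fallback of adjoining a killed generator is therefore unnecessary here.
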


\begin{proof}
Let $H$ be a group with undecidable word problem and let $\langle \mathbf{x}\mid\mathbf{r}\rangle$ be a presentation of $H$. Set $\Delta:=\mathbf{x}$, define
\[
\Sigma:=\{(x, 1)\mid x\in\mathbf{x}\}\cup\{(1, x^{-1})\mid x\in\mathbf{x}\}\cup\{(R, 1)\mid x\in\mathbf{r}\}\cup\{(1, R^{-1})\mid R\in\mathbf{r}\}
\]
and define $g:(p, q)\mapsto p$ and $h:(p, q)\mapsto q$. Note that neither $g$ nor $h$ is injective, as if $R\in\mathbf{r}$ then $g(R, 1)$ may be realised as the image of a word over $\{(x, 1)\mid x\in\mathbf{x}\}\cup\{(1, x^{-1})\mid x\in\mathbf{x}\}$, and analogously for $h(1, R)$. Taking $w\in F(\Delta)$, the instance $(\Sigma, \Delta, g, h, w, 1, 1, 1)$ of the $\GPCPnoI$ has a solution if and only if the word $w$ defines the identity of $H$ \cite[Proof of Proposition 4.1]{Myasnikov2014Post}. As $H$ has undecidable word problem it follows that the $\GPCPnoI$ is undecidable.
\qed\end{proof}

\section{The $\GPCP$ and extreme-letter restrictions}
\label{sec:injectivity}
In this section we connect the $\GPCP$ and the $\PCP$ under a certain rational constraint.
This connection underlies Theorem \ref{thm:rationalundecidable}, as well as the implications $\BP\Rightarrow\GPCPI$ and $\PCP\Rightarrow\GPCPCI$ in Theorem \ref{thm:connections}.

Our results here, as in much of the rest of the paper, are broken down in terms of injectivity, and also alphabet sizes; understanding for which sizes of alphabet $\Sigma$ the classical Post Correspondence Problem is decidable/undecidable is an important research theme \cite{Neary2015undecidability, Ehrenfeucht1982generalized, Harju1997Morphisms}.

For an alphabet $\Sigma$, let $\Sigma^{-1}$ be the set of formal inverses of $\Sigma$, and write $\Sigma^{\pm1}=\Sigma \cup \Sigma^{-1}$. For example, if $\Sigma=\{a,b\}$ then $\Sigma^{\pm1}=\{a, b, a^{-1}, b^{-1}\}$.

The \emph{extreme-letter-restricted Post Correspondence Problem for free groups} $\ELPCP$ (see also Table \ref{table:definitions}) is the following problem: Let $g, h: F(\Sigma)\rightarrow F(\Delta)$ be two group homomorphisms, $\Omega\subset\Sigma$ a set, and $a, b\in\Sigma^{\pm1}$ two letters; an instance of the $\ELPCP$ is a $6$-tuple $I_{\ELPCP}=(\Sigma, \Delta, g, h, a, \Omega, b)$ and the $\ELPCP$ itself is:
\begin{align*}
&\textnormal{
Given $I_{\ELPCP}=(\Sigma, \Delta, g, h, a, \Omega, b)$,
is there $x\in F(\Sigma)\setminus\{1\}$ such that:}\\
&\textnormal{- $g(x)=h(x)$, and}\\
&\textnormal{- $x$ decomposes as a freely reduced word $ayb$ for some $y\in F(\Omega)$?}
\end{align*}

\subsubsection{Connecting the $\GPCP$ and the $\PCP$.}
We start with an instance $I_{\GPCP}=(\Sigma, \Delta, g, h, u_1, u_2, v_1, v_2)$ of the $\GPCP$ and consider the instance 
\[
I_{\PCP}=(\Sigma\sqcup\{B, E\}, \Delta\sqcup\{B, E, \#\}, g', h')
\]
of the $\PCP$, where $g'$ and $h'$ are defined as follows.
\begin{align*}
g'(z):=
\begin{cases}
\#^{-1}g(z)\#&\textrm{if} \ z\in\Sigma\\
B\#u_1\#&\textrm{if} \ z=B\\
\#^{-1}u_2\#E&\textrm{if} \ z=E
\end{cases}
&&
h'(z):=
\begin{cases}
\#h(z)\#^{-1}&\textrm{if} \ z\in\Sigma\\
B\#v_1\#^{-1}&\textrm{if} \ z=B\\
\#v_2\#E&\textrm{if} \ z=E
\end{cases}
\end{align*}

Injectivity is preserved through this construction since $\rk(\im(g))+2=\rk(\im(g'))$: this can be seen via Stallings' foldings, or directly by noting that the image of $g'$ restricted to $F(\Sigma)$ is isomorphic to $\im(g)$, that $B$ only occurs in $g'(d)$ and $E$ only occurs in $g'(e)$. Analogously, $h'$ is an injection if and only if $h$ is, as again $\rk(\im(h))+2=\rk(\im(h'))$. Thus we get:

\begin{lemma}
\label{lem:injectivity}
The map $g'$ is injective if and only if $g$ is, and the map $h'$ is injective if and only if $h$ is.
\end{lemma}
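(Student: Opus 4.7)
For both biconditionals, the ``only if'' direction is immediate. The restriction $g'|_{F(\Sigma)}$ is the map $w\mapsto \#^{-1}g(w)\#$, a conjugate of $g$, so if $g'$ is injective then so is $g$; the symmetric observation (conjugation by $\#^{-1}$ in place of $\#$) handles $h'$ and $h$.

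For the ``if'' direction I focus on $g'$; the argument for $h'$ is completely symmetric. Assume $g$ is injective and take a non-trivial freely reduced $w\in F(\Sigma\sqcup\{B,E\})$. Decompose
\[
w=w_0\,\ell_1\,w_1\,\ell_2\,\cdots\,\ell_n\,w_n
\]
with $w_i\in F(\Sigma)$ and $\ell_i\in\{B^{\pm1},E^{\pm1}\}$; the free reduction of $w$ forces $\ell_i\neq\ell_{i+1}^{-1}$ whenever $w_i=1$. The case $n=0$ is immediate: $g'(w)=\#^{-1}g(w)\#\neq1$ by injectivity of $g$. For $n\geq1$ the strategy is to show that at least one of the \emph{marker letters} in $\{B,B^{-1},E,E^{-1}\}$ survives in the freely reduced form of $g'(w)$, yielding $g'(w)\neq1$. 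By construction these letters appear in $g'(w)$ only inside the blocks $g'(\ell_i)$, always at an extreme end of the block; specifically $B$ is the first letter of $g'(B)$, $B^{-1}$ the last of $g'(B^{-1})$, $E$ the last of $g'(E)$, and $E^{-1}$ the first of $g'(E^{-1})$.

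Two markers of $g'(w)$ can cancel only if every marker strictly between them has cancelled first, so by choosing an innermost cancelling pair we reduce to the case of two markers from consecutive $\ell_p$ and $\ell_{p+1}$. These markers must be mutually inverse; for $(\ell_p,\ell_{p+1})=(B,B^{-1})$ a direct calculation gives
\[
g'(B)\,g'(w_p)\,g'(B^{-1})=B\,\#\,u_1 g(w_p)u_1^{-1}\,\#^{-1}\,B^{-1},
\]
and the markers $B,B^{-1}$ become adjacent only if $\#\,u_1 g(w_p)u_1^{-1}\,\#^{-1}$ reduces to $1$, i.e.\ if $g(w_p)=1$. Injectivity of $g$ then forces $w_p=1$, but then $\ell_p\ell_{p+1}=BB^{-1}$ contradicts the free reduction of $w$. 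The remaining sign-choices $(B^{-1},B),(E,E^{-1}),(E^{-1},E)$ are handled identically and reduce to the same equation $g(w_p)=1$. Hence no marker cancels and $g'(w)\neq1$.

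The main technical point is this case analysis together with the innermost-pair reduction, which can be made rigorous by a short induction on $n$. An alternative route, foreshadowed in the paragraph before the lemma, uses Stallings foldings to show $\rk\langle g'(z):z\in\Sigma\sqcup\{B,E\}\rangle = \rk(\im(g))+2 = |\Sigma|+2$, relying on the fact that the new letters $B$ and $E$ each appear in exactly one generator; however verifying that no folding reduces rank comes down to the same combinatorial check as above, so I adopt the direct argument as the primary route.
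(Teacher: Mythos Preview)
Your proof is correct. The paper takes a different and much shorter route: it simply asserts the rank identity $\rk(\im(g'))=\rk(\im(g))+2$ (justified either by Stallings foldings or by the observation that $B$ occurs only in $g'(B)$ and $E$ only in $g'(E)$, while $g'|_{F(\Sigma)}$ has image isomorphic to $\im(g)$), and then uses that a homomorphism between free groups is injective if and only if the rank of its image equals the rank of its domain. This handles both directions simultaneously.

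Your approach instead verifies injectivity of $g'$ directly at the level of reduced words, via the marker/innermost-cancellation argument. This is more hands-on and avoids invoking Hopficity of free groups or the folding machinery; it also makes explicit exactly why the extra generators $B,E$ cannot create new kernel elements. The cost is length: the case analysis you carry out is essentially the combinatorial content hidden inside the paper's one-line rank claim, as you yourself note in your final paragraph. Both arguments are valid; the paper's is more economical, yours more self-contained.
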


We now connect the solutions of $I_{\GPCP}$ to those of $I_{\PCP}$.

\begin{lemma}
\label{lem:solnforGPCP}
A word $y\in F(\Sigma)$ is a solution to $I_{\GPCP}$ if and only if the word `$ByE$' is a solution to $I_{\PCP}$.
\end{lemma}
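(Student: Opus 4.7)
\medskip
\noindent\textbf{Proof plan.} My approach is a direct calculation unfolding the definitions of $g'$ and $h'$. The key observation to set up first is that, because $g'$ and $h'$ are homomorphisms and the images of the generators of $\Sigma$ sit inside conjugating $\#^{\pm 1}$'s, for every $y\in F(\Sigma)$ the internal $\#$'s telescope to give
\[
g'(y) \;=\; \#^{-1} g(y) \#, \qquad h'(y) \;=\; \# h(y) \#^{-1}.
\]
This is an immediate induction on the length of a word in $\Sigma^{\pm 1}$, using the single cancellation $\#\cdot\#^{-1}=1$ in the ambient free group $F(\Delta\sqcup\{B, E, \#\})$.

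Then I would substitute these two identities into $g'(ByE) = g'(B)\, g'(y)\, g'(E)$ and $h'(ByE) = h'(B)\, h'(y)\, h'(E)$ and carry out the cancellations at the junctions:
\[
g'(ByE) = B\#u_1\# \cdot \#^{-1}g(y)\# \cdot \#^{-1}u_2\#E \;=\; B\# u_1 g(y) u_2 \#E,
\]
\[
h'(ByE) = B\#v_1\#^{-1} \cdot \#h(y)\#^{-1} \cdot \#v_2\#E \;=\; B\# v_1 h(y) v_2 \#E.
\]

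To finish, I would compare the two expressions. Since $B$, $E$, and $\#$ are free generators of $F(\Delta\sqcup\{B,E,\#\})$ and do not appear in $\Delta$, we may cancel the common $B\#$ on the left and $\#E$ on the right, so $g'(ByE)=h'(ByE)$ in $F(\Delta\sqcup\{B,E,\#\})$ holds if and only if $u_1 g(y) u_2 = v_1 h(y) v_2$ in $F(\Delta)$, which is exactly the $\GPCP$ equation for $I_{\GPCP}$. Non-triviality of $ByE$ is automatic because $B\notin\Sigma$, so the biconditional of the lemma follows. The proof is essentially a mechanical verification; there is no serious obstacle, only the bookkeeping of $\#^{\pm 1}$ cancellations, which is handled uniformly by $\#$ being a newly adjoined free generator.
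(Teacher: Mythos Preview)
Your proof is correct and follows essentially the same route as the paper's own proof: both compute $g'(ByE)$ and $h'(ByE)$ explicitly, use the telescoping identity $g'(y)=\#^{-1}g(y)\#$, $h'(y)=\#h(y)\#^{-1}$ for $y\in F(\Sigma)$, and then cancel the common $B\#\ldots\#E$ frame to obtain the equivalence with $u_1g(y)u_2=v_1h(y)v_2$. Your added remarks on the induction and on non-triviality of $ByE$ are fine but not needed beyond what the paper does.
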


\begin{proof}
Starting with $y$ being a solution to $I_{\GPCP}$, we obtain the following sequence of equivalent identities:
\begin{align*}
u_1g(y)u_2&=v_1h(y)v_2\\
B\#(u_1g(y)u_2)\#E&=B\#(v_1h(y)v_2)\#E\\
B\#u_1\#\cdot \#^{-1}g(y)\#\cdot \#^{-1}u_2\#E&=B\#v_1\#^{-1}\cdot \#h(y)\#^{-1}\cdot \#v_2\#E\\
g'(B)g'(y)g'(E)&=h'(B)h'(y)h'(E)\\
g'(ByE)&=h'(ByE).
\end{align*}
Therefore $ByE$ is a solution to $I_{\PCP}$, so the claimed equivalence follows.
\qed\end{proof}

Theorem \ref{thm:GPCP} includes conditions on injectivity and on alphabet sizes; see Table \ref{table:definitions} for definitions.

\begin{theorem}\leavevmode The following hold in a finitely generated free group. 
\label{thm:GPCP}
\begin{enumerate}
\item $\ELPCPoneI(n+2)\Rightarrow\GPCPoneI(n)$
\item $\ELPCPtwoI(n+2)\Rightarrow\GPCPtwoI(n)$
\end{enumerate}
\end{theorem}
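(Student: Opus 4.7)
The plan is to apply the $\ELPCP$ algorithm to the instance $I_{\ELPCP} := (\Sigma \sqcup \{B, E\}, \Delta \sqcup \{B, E, \#\}, g', h', B, \Sigma, E)$ built from $I_{\GPCP}$ via the construction set up immediately above the statement. The source alphabet has size $n + 2$, and Lemma \ref{lem:injectivity} guarantees that the injectivity type of $(g', h')$ matches that of $(g, h)$, so $I_{\ELPCP}$ lies in $\ELPCPoneI(n+2)$ (resp.\ $\ELPCPtwoI(n+2)$).

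By Lemma \ref{lem:solnforGPCP}, a word $y \in F(\Sigma)$ satisfies the $\GPCP$ identity $u_1 g(y) u_2 = v_1 h(y) v_2$ if and only if $ByE$ satisfies $g'(ByE) = h'(ByE)$. Since $B$ and $E$ do not lie in $\Sigma^{\pm 1}$, every word of the form $ByE$ with $y \in F(\Sigma)$ is automatically freely reduced and non-trivial, and it fits the $\ELPCP$ pattern $a y b$ with $a = B$, $b = E$, and $y \in F(\Omega) = F(\Sigma)$; conversely, every $\ELPCP$ solution has this shape. Hence $I_{\ELPCP}$ admits an $\ELPCP$ solution if and only if some $y \in F(\Sigma)$ (possibly $y = 1$) satisfies the $\GPCP$ identity.

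The main obstacle is that the $\GPCP$ demands $y \neq 1$, whereas the candidate $x = BE$ (corresponding to $y = 1$) is a valid $\ELPCP$ solution precisely when $u_1 u_2 = v_1 v_2$. I would handle this by first testing the equality $u_1 u_2 = v_1 v_2$ in $F(\Delta)$ directly: if it fails, then $y = 1$ is not a solution of the identity and the $\ELPCP$ output is exactly the $\GPCP$ output. If it holds, setting $p := u_1^{-1} v_1$ (which then equals $u_2 v_2^{-1}$), the $\GPCP$ identity rewrites as $g(y) = p h(y) p^{-1}$, and so $\GPCP$ reduces to the $\PCP$ on the pair $(g, h_p)$ with $h_p(y) := p h(y) p^{-1}$; this conjugation preserves the injectivity profile of $(g, h)$ since it is an automorphism of $F(\Delta)$. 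I would then decide this residual $\PCP$ via $\ELPCP$: for each $(a, b) \in \Sigma^{\pm 1} \times \Sigma^{\pm 1}$ apply $\ELPCP$ with $\Omega = \Sigma$ (to catch solutions of length at least two), and separately check the finitely many single-letter candidates; to meet the $n + 2$ alphabet bound, embed the size-$n$ instance into a size-$(n+2)$ instance by adjoining two unused generators mapped to fresh letters of $\Delta$, preserving injectivity. This completes the reduction in both the one-injective and both-injective cases.
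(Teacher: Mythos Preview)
Your first two paragraphs reproduce the paper's proof exactly: build $I_{\ELPCP}=(\Sigma\sqcup\{B,E\},\Delta\sqcup\{B,E,\#\},g',h',B,\Sigma,E)$ from $I_{\GPCP}$, invoke Lemma~\ref{lem:injectivity} to match the injectivity type (and the alphabet count $n+2$), and invoke Lemma~\ref{lem:solnforGPCP} for the correspondence between $\GPCP$ solutions $y$ and $\ELPCP$ solutions $ByE$. The paper's proof stops there.

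Your third paragraph goes beyond the paper. You correctly notice that the candidate $x=BE$ (i.e.\ $y=1$) is a legitimate $\ELPCP$ solution exactly when $u_1u_2=v_1v_2$, yet corresponds to no \emph{non-trivial} $\GPCP$ solution; the paper's short proof does not address this boundary case. Your patch---first test $u_1u_2=v_1v_2$ in $F(\Delta)$; if it holds, rewrite the $\GPCP$ identity as $g(y)=p\,h(y)\,p^{-1}$ with $p=u_1^{-1}v_1=u_2v_2^{-1}$ and solve the resulting $\PCP$ for $(g,h_p)$ by running $\ELPCP(n+2)$ over all first/last-letter pairs $(a,b)\in\Sigma^{\pm1}\times\Sigma^{\pm1}$ together with single-letter checks, after padding $\Sigma$ by two fresh generators sent to fresh letters of $\Delta$---is correct and preserves the injectivity profile. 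So your argument is the paper's argument plus a valid repair of a subtlety the paper glosses over; the extra machinery is not needed for the paper's intended level of detail, but it does close the gap.
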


\begin{proof}
Let $I_{\GPCP}$ be an instance of the $\GPCPI$, and construct from it the instance $I_{\ELPCP}=(\Sigma\sqcup\{B, E\}, \Delta\sqcup\{B, E, \#\}, g', h', B, \Sigma, E)$ of the $\ELPCP$, which is the instance $I_{\PCP}$ defined above under the constraint that solutions have the form $ByE$ for some $y\in F(\Sigma)$. 

By Lemma \ref{lem:injectivity}, $I_{\ELPCP}$ is an instance of the $\ELPCPoneI(n+2)$ if and only if $I_{\GPCP}$ is an instance of $\GPCPoneI(n)$, and similarly for $\ELPCPtwoI(n+2)$ and $\GPCPtwoI(n)$. The result then follows from Lemma \ref{lem:solnforGPCP}.
\qed\end{proof}

The above does not prove that $\PCPI\Leftrightarrow\GPCPI$, because $I_{\PCP}$ might have solutions of the form $BxB^{-1}$ or $E^{-1}xE$.
For example, if we let $I_{\GPCP}=(\{a\}, \{a, c, d\}, g, h, c, \epsilon, \epsilon, d)$ with $g(a)=a$ and $h(a)=cac^{-1}$, then there is no $x\in F(a)$ such that $cg(a)=h(a)d$, but defining $g', h'$ as above then $BaB^{-1}\in\eq(g', h')$.
In Section \ref{sec:conjIn} we consider maps where such solutions are impossible, and there the equivalence $\PCPI\Leftrightarrow\GPCPI$ does hold.

\section{The $\PCP$ under rational constraints}
For an alphabet $A$, a language $L\subseteq A^*$ is {\em regular} if there exists some finite state automaton over $A$ which accepts exactly the words in $L$. 
Let $\pi:(\Sigma^{\pm 1})^*\to F(\Sigma)$ be the natural projection map. 
A subset $R\subseteq F(\Sigma)$ is {\em rational} if $R=\pi(L)$ for some regular language $L\subseteq (\Sigma^{\pm 1})^*$. 

In this section we consider the $\CPCPI$, which is the $\PCPI$ subject to the rational constraint $\mathcal{R}$.
We prove that the $\CPCPI$ under any rational constraint $\mathcal{R}$ can be solved via the Basis Problem ($\BP$) (so $\BP\Rightarrow\CPCPI$ from Theorem \ref{thm:connections}).
We later apply this to prove $\BP\Rightarrow\GPCPI$ from Theorem \ref{thm:connections}, as the $\ELPCPI$ from Section \ref{sec:injectivity} is simply the $\PCPI$ under a specific rational constraint.

\begin{theorem}\leavevmode
The following hold in a finitely generated free group. 
\label{thm:rationalBODY}
\begin{enumerate}
\item $\AEPoneIb(n)\Rightarrow\CPCPoneI(n)$
\item $\AEPtwoIb(n)\Rightarrow\CPCPtwoI(n)$
\end{enumerate}
\end{theorem}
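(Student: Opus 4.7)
The plan is to use the $\BP$ as a subroutine: compute a basis of the equaliser $\eq(g,h)$, realise it as a Stallings automaton, and then intersect with the automaton defining the rational constraint $\mathcal{R}$.

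Fix an instance $(\Sigma, \Delta, g, h, \mathcal{R})$ of $\CPCPI$, with $\mathcal{R}$ presented as $\pi(L)$ for some regular language $L \subseteq (\Sigma^{\pm 1})^*$. First I would apply the $\BP$ to $(g,h)$ (whose injectivity hypotheses match those of $\CPCPI$) to obtain a finite basis of $H := \eq(g,h)$. From this basis I would construct the Stallings graph $\Gamma_H$ of $H$, which, viewed as a finite automaton with its basepoint as both initial and final state, recognises the regular language $L_H \subseteq (\Sigma^{\pm 1})^*$ of freely reduced words representing elements of $H$.

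Next I would replace $L$ by the regular language $\widetilde L$ consisting of the free reductions of the words in $L$. This language is regular and effectively constructible by Benois' classical theorem on rational subsets of free groups; by construction, $\widetilde L$ consists of reduced words and $\pi(\widetilde L) = \mathcal{R}$. A product construction then yields a finite automaton accepting the regular language $M := \widetilde L \cap L_H$ of reduced words, whose projection under $\pi$ is precisely $\mathcal{R} \cap H = \mathcal{R} \cap \eq(g,h)$. Deciding the original instance now reduces to the decidable problem of checking whether $M$ contains a non-empty word.

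The main obstacle is the mismatch between the monoid-level automaton presenting $\mathcal{R}$ and the group-level Stallings automaton presenting $H$: words in $L$ need not be freely reduced, yet $\Gamma_H$ only reads reduced words, and the full preimage $\pi^{-1}(H)$ is typically not regular (for $H=\{1\}$ it is the Dyck language). Benois' theorem is the key tool bridging these two levels and making every step effective. The argument is uniform in the injectivity hypotheses and preserves the alphabet size $n = |\Sigma|$, so statements (1) and (2) follow together.
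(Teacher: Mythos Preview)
Your proposal is correct and follows essentially the same approach as the paper: use the $\BP$ to compute a basis for $\eq(g,h)$, realise $\eq(g,h)$ as a rational subset via its Stallings (core) graph, intersect with the rational constraint $\mathcal{R}$, and test the intersection for non-triviality. The only difference is presentational: you spell out the word-level automata manipulation via Benois' theorem, whereas the paper appeals directly to the effective closure of rational subsets of free groups under intersection.
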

\begin{proof}
Let $g, h$ be homomorphisms from $F(\Sigma)$ to $F(\Delta)$ such that at least one of them is injective. Their equaliser $\eq(g,h)$ is a finitely generated subgroup of $F(\Sigma)$ \cite{Goldstein1986Fixed}, so $\eq(g,h)$ is a rational set (see for example Section 3.1 in \cite{bartholdi2010rational}).

As the Basis Problem is soluble, we can compute a basis for $\eq(g, h)$. This is equivalent to finding a finite state automaton $\mathcal{A}$ (called a ``core graph'' in the literature on free groups; see \cite{Kapovich2002Stallings}) that accepts the set $\eq(g,h)$.

Let $\mathcal{R}$ be a rational set in $F(\Sigma)$. The $\CPCP$ for $g$ and $h$ is equivalent to determining if there exists any non-trivial $x \in \mathcal{R} \cap \eq(g,h)$. Since the intersection of two rational sets is rational, and an automaton recognising this intersection is computable by the standard product construction of automata, one can determine whether $\mathcal{R} \cap \eq(g,h)$ is trivial or not, and thus solve $\CPCP$.
\qed\end{proof}

The following theorem immediately implies Theorem \ref{thm:rationalundecidable}, as restrictions on the maps do not affect the rational constraints.

\begin{theorem}[Theorem \ref{thm:rationalundecidable}]
\label{thm:rationalundecidableBODY}
$\ELPCPnoI$ is undecidable in free groups.
\end{theorem}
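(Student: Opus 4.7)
The plan is to follow exactly the strategy used in Theorem \ref{thm:GPCP}, but track what happens under the non-injective hypothesis. That is, given an instance $I_{\GPCP}=(\Sigma, \Delta, g, h, u_1, u_2, v_1, v_2)$ of the $\GPCPnoI$, I would build the same instance
\[
I_{\ELPCP}=(\Sigma\sqcup\{B, E\},\ \Delta\sqcup\{B, E, \#\},\ g',\ h',\ B,\ \Sigma,\ E)
\]
of the $\ELPCP$ as in Section \ref{sec:injectivity}, using precisely the maps $g'$ and $h'$ defined there.

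The first step is to verify that this construction lands inside $\ELPCPnoI$. By Lemma \ref{lem:injectivity}, $g'$ is injective if and only if $g$ is, and likewise for $h'$ and $h$. Hence if both $g$ and $h$ are non-injective, then so are $g'$ and $h'$, so $I_{\ELPCP}$ is genuinely an instance of the $\ELPCPnoI$. The second step is a direct appeal to Lemma \ref{lem:solnforGPCP}: a word $y \in F(\Sigma)$ solves $I_{\GPCP}$ if and only if $ByE$ solves $I_{\PCP}$, and since the extreme-letter constraint ``$B\cdots E$ with middle in $F(\Sigma)$'' is exactly the shape of $ByE$, this furnishes a bijection between solutions of $I_{\GPCP}$ and solutions of $I_{\ELPCP}$. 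Therefore $I_{\GPCP}$ has a non-trivial solution if and only if $I_{\ELPCP}$ does, giving the reduction $\ELPCPnoI \Rightarrow \GPCPnoI$.

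Finally, I would invoke Lemma \ref{lem:PCPinj}, which states that the $\GPCPnoI$ is undecidable. Since a decision algorithm for $\ELPCPnoI$ together with the above reduction would decide $\GPCPnoI$, we conclude that $\ELPCPnoI$ is undecidable, as required.

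There is no real obstacle here: the whole proof is essentially the observation that the construction of Section \ref{sec:injectivity} is completely insensitive to whether the input maps $g, h$ are injective or not, and that Lemma \ref{lem:injectivity} confirms non-injectivity is preserved by the construction. The only thing to be slightly careful about is that the extreme-letter constraint $(B, \Sigma, E)$ precisely captures the shape of the candidate solutions $ByE$ produced by the reduction, so that no spurious solutions (such as $BxB^{-1}$ or $E^{-1}xE$, which the end of Section \ref{sec:injectivity} warns about for the unconstrained $\PCP$) can arise; this is automatic because the constraint forces the first letter to be $B$ and the last letter to be $E$.
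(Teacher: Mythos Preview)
Your proposal is correct and follows exactly the paper's own proof: the paper simply states that $\ELPCPnoI\Rightarrow\GPCPnoI$ by Lemmas \ref{lem:injectivity} and \ref{lem:solnforGPCP}, and then invokes Lemma \ref{lem:PCPinj} for the undecidability of $\GPCPnoI$. Your write-up is just a more detailed unpacking of precisely these steps, including the same observation that the Section \ref{sec:injectivity} construction preserves non-injectivity and that the extreme-letter constraint exactly pins down the $ByE$ form.
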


\begin{proof}
We have $\ELPCPnoI\Rightarrow\GPCPnoI$ by Lemmas \ref{lem:injectivity} and \ref{lem:solnforGPCP}.
The result follows as $\GPCPnoI$ is undecidable by Lemma \ref{lem:PCPinj}.
\qed\end{proof}

\section{Main results, part 1}
\label{sec:MainOne}
Here we combine results from the previous sections to prove certain of the implications in Theorem \ref{thm:connections}.
The implications we prove refine Theorem \ref{thm:connections}, as they additionally contain information on alphabet sizes and on injectivity.
\begin{theorem}\leavevmode The following hold in finitely generated free groups.
\label{thm:mainBODY}
\begin{enumerate}
\item\label{mainBODY:1} $\AEPoneIb(n+2)\Rightarrow\GPCPoneI(n)\Rightarrow\PCPoneI(n)$
\item\label{mainBODY:2} $\AEPtwoIb(n+2)\Rightarrow\GPCPtwoI(n)\Rightarrow\PCPtwoI(n)$
\end{enumerate}
\end{theorem}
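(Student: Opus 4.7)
The plan is to chain Theorems \ref{thm:rationalBODY} and \ref{thm:GPCP} together with the trivial inclusion of the $\PCP$ inside the $\GPCP$. For the leftmost implication in each of (1) and (2) I would factor the reduction through both the $\CPCP$ and the $\ELPCP$, via the chain
\[
\BP^{(*,\inj)}(n+2)\Rightarrow\CPCP^{(*,\inj)}(n+2)\Rightarrow\ELPCP^{(*,\inj)}(n+2)\Rightarrow\GPCP^{(*,\inj)}(n),
\]
where $*\in\{\neg\inj,\inj\}$. The first arrow is exactly Theorem \ref{thm:rationalBODY} applied to a source alphabet of size $n+2$, and the last arrow is exactly Theorem \ref{thm:GPCP}.

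The middle arrow $\CPCP^{(*,\inj)}(n+2)\Rightarrow\ELPCP^{(*,\inj)}(n+2)$ holds because every $\ELPCP$ instance is, algorithmically, a $\CPCP$ instance. From input $(\Sigma,\Delta,g,h,a,\Omega,b)$ with $|\Sigma|=n+2$, I would construct the rational set $\mathcal{R}\subseteq F(\Sigma)$ as the image under the natural projection $\pi\colon(\Sigma^{\pm1})^*\to F(\Sigma)$ of the regular language $\{a\}\cdot L\cdot\{b\}$, where $L\subseteq(\Omega^{\pm1})^*$ is the regular language of freely reduced words over $\Omega^{\pm1}$ that do not start with $a^{-1}$ (if $a\in\Omega^{\pm1}$) and do not end with $b^{-1}$ (if $b\in\Omega^{\pm1}$). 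A finite state automaton recognising $\{a\}\cdot L\cdot\{b\}$ is computable from $(a,\Omega,b)$, and $\mathcal{R}$ is then precisely the set of $x\in F(\Sigma)$ that freely reduce as $ayb$ for some $y\in F(\Omega)$, so an oracle for the $\CPCP$ on the instance $(\Sigma,\Delta,g,h,\mathcal{R})$ decides the original $\ELPCP$ instance.

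For the rightmost implication $\GPCP^{(*,\inj)}(n)\Rightarrow\PCP^{(*,\inj)}(n)$, the plan is simply to view a $\PCP$ instance $(\Sigma,\Delta,g,h)$ as the $\GPCP$ instance $(\Sigma,\Delta,g,h,1,1,1,1)$: injectivity of $g$ and $h$ is unchanged, the source alphabet size is preserved, and non-trivial solutions of the two instances coincide.

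The hard part is essentially bookkeeping rather than mathematical content: the only step where the alphabet grows is Theorem \ref{thm:GPCP}, which introduces the two fresh letters $B,E$, and every other arrow in the chain preserves both alphabet size and injectivity type. Composing them therefore yields exactly the stated $(n+2)\Rightarrow(n)$ implications in (1) and (2).
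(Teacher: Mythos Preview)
Your proposal is correct and follows essentially the same route as the paper: factor through Theorem~\ref{thm:rationalBODY} and Theorem~\ref{thm:GPCP}, then use the trivial inclusion $\PCP\subseteq\GPCP$ with empty constants. The only difference is cosmetic: the paper collapses your two middle arrows into one by simply observing that the $\ELPCP$ \emph{is} the $\PCP$ under a particular rational constraint, whereas you explicitly name the intermediate $\CPCP$ step and spell out an automaton for the set $\{ayb : y\in F(\Omega),\ ayb\text{ reduced}\}$.
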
 

\begin{proof}
As $\ELPCPoneI$ is an instance of the $\PCPI$ under a rational constraint, Theorem \ref{thm:rationalBODY} gives us that $\AEPoneIb(n+2)\Rightarrow\ELPCPoneI(n+2)$, while Theorem \ref{thm:GPCP} gives us that $\ELPCPoneI(n+2)\Rightarrow\GPCPoneI(n)$, and the implication $\GPCPoneI(n)\Rightarrow\PCPoneI(n)$ is obvious as instances of the $\PCP$ are instances of the $\GPCP$ but with empty constants $u_i, v_i$. Sequence (\ref{mainBODY:1}), with one map injective, therefore holds, while the proof of sequence (\ref{mainBODY:2}) is identical.
\qed\end{proof}

Removing the injectivity assumptions gives the following corollary; the implications $\BP\Rightarrow\GPCPI\Rightarrow\PCP$ of Theorem \ref{thm:connections} follow immediately.

\begin{corollary}
\label{corol:mainBODY}
$\BP(n+2)\Rightarrow\GPCPI(n)\Rightarrow\PCP(n)$
\end{corollary}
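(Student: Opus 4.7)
The plan is to obtain the corollary from Theorem \ref{thm:mainBODY} by unifying its two injectivity-stratified chains into the combined problems, using the decidability of injectivity testing as the case-dispatch mechanism.

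For the first implication $\BP(n+2) \Rightarrow \GPCPI(n)$, I would note that by definition $\BP(n+2) = \BPoneI(n+2) \cup \BPtwoI(n+2)$ and $\GPCPI(n) = \GPCPoneI(n) \cup \GPCPtwoI(n)$, while Theorem \ref{thm:mainBODY} already supplies the two implications $\BPoneI(n+2) \Rightarrow \GPCPoneI(n)$ and $\BPtwoI(n+2) \Rightarrow \GPCPtwoI(n)$. Given an instance $(\Sigma, \Delta, g, h, u_1, u_2, v_1, v_2)$ of $\GPCPI(n)$, I would use Stallings' foldings to algorithmically determine whether both $g$ and $h$ are injective, and then route the input to the appropriate subsolver of $\BP(n+2)$.

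For $\GPCPI(n) \Rightarrow \PCP(n)$, the same chain of Theorem \ref{thm:mainBODY} already yields $\GPCPI(n) \Rightarrow \PCPoneI(n) \cup \PCPtwoI(n) = \PCPI(n)$, since any $\PCP$ instance is a $\GPCP$ instance with trivial constants $u_i, v_i$. The remaining regime is $\PCPnoI(n)$, which is trivially decidable by Lemma \ref{lem:CommonKer}: if neither $g$ nor $h$ is injective, then $\eq(g, h)$ is automatically non-trivial. Concretely, on an input to $\PCP(n)$, I would first test injectivity of each map; if both fail, output YES; otherwise invoke the $\PCPI(n)$ solver supplied by the hypothesised $\GPCPI(n)$ solver.

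The proof is essentially bookkeeping and contains no substantial obstacle: all the content is in Theorem \ref{thm:mainBODY} and Lemma \ref{lem:CommonKer}, and the only additional ingredient is the algorithmic decidability of injectivity for free-group homomorphisms (already invoked in Proposition \ref{prop:PCPnoI}). The only thing worth double-checking is that the alphabet-size accounting lines up — each step of the reduction preserves the source alphabet size of the $\GPCP$/$\PCP$ instance, so the $n$ on the right and the $n+2$ on the left of $\BP(n+2) \Rightarrow \GPCPI(n)$ track correctly through both stages.
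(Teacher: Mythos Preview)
Your proposal is correct and follows essentially the same approach as the paper: combine the two injectivity-stratified chains of Theorem \ref{thm:mainBODY} into $\BP(n+2)\Rightarrow\GPCPI(n)\Rightarrow\PCPI(n)$, and then pass from $\PCPI(n)$ to $\PCP(n)$ via Proposition \ref{prop:PCPnoI} (equivalently, Lemma \ref{lem:CommonKer} plus decidable injectivity testing). The paper states this in one line, while you spell out the case-dispatch explicitly, but the content is identical.
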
 

\begin{proof}
Theorem \ref{thm:mainBODY} gives that $\BP(n+2)\Rightarrow\GPCPI(n)\Rightarrow\PCPI(n)$, while $\PCP(n)\Leftrightarrow\PCPI(n)$ by Proposition \ref{prop:PCPnoI}.
\qed\end{proof}

\section{Conjugacy inequivalent maps}
\label{sec:CI}
In this section we prove genericity results and give conditions under which the $\PCP$ implies the $\GPCP$.
In particular, we prove Theorem \ref{thm:genericity}, and we prove the implication $\PCP\Rightarrow\GPCPCI$ from Theorem \ref{thm:connections}.

A pair of maps $g, h: F(\Sigma)\rightarrow F(\Delta)$ is said to be \emph{conjugacy inequivalent} if for every $u\in F(\Delta)$ there does not exist any non-trivial $x\in F(\Sigma)$ such that $g(x)=u^{-1}h(x)u$ (see Table \ref{table:definitions}). For example, if the images of $g, h: F(\Sigma)\rightarrow F(\Delta)$ are \emph{conjugacy separated}, that is, if $\im(g)\cap u^{-1}\im(h)u$ is trivial for all $u\in F(\Delta)$, then $g$ and $h$ are conjugacy inequivalent. 
We write $\PCPCI$/$\GPCPCI$ for those instances of the $\GPCPI$/$\PCPI$ where the maps are conjugacy inequivalent.

\subsection{Random maps and genericity}

Here we show that among all pairs of maps $g, h: F(\Sigma)\rightarrow F(\Delta)$, the property of being conjugacy inequivalent occurs with probability $1$; that is, conjugacy inequivalent maps are \emph{generic}.

\begin{theorem}[Theorem \ref{thm:genericity}]
\label{thm:genericityBODY}
Instances of the $\PCPCI$ are generic instances of the $\PCP$. That is, with probability $1$, a pair of maps is conjugacy inequivalent.
\end{theorem}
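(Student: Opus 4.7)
My plan is to reduce the theorem to two subsidiary genericity facts: that a generic homomorphism is injective, and that the images $\im(g), \im(h)$ of a generic pair $(g,h)$ are \emph{conjugacy separated} in $F(\Delta)$, meaning $\im(g) \cap u^{-1}\im(h)u = \{1\}$ for every $u \in F(\Delta)$. These two together imply conjugacy inequivalence: if $g(x) = u^{-1}h(x)u$ for some $u \in F(\Delta)$ and some $x \in F(\Sigma) \setminus \{1\}$, then either $g(x) = 1$ forces $h(x)=1$ and hence $x=1$ by injectivity of $g$, or $g(x) \neq 1$ lies in $\im(g) \cap u^{-1}\im(h)u$, contradicting conjugacy separation. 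Hence it suffices to establish both properties generically.

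The first fact, generic injectivity, is a standard consequence of genericity of small cancellation conditions: a uniformly random $k$-tuple in a ball of $F(\Delta)$ satisfies any fixed $C'(\lambda)$ with probability tending to $1$ as the radius grows, and such a tuple freely generates a free subgroup of rank $k$, so $g$ is injective; the same applies independently to $h$. The counting and measure framework developed in Section \ref{sec:CI} formalises this, and I would quote the relevant results from \cite{KMSS2003}.

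For the second fact, I would apply a sufficiently strong small cancellation condition to the joint $2k$-tuple $(g(y_1),\ldots,g(y_k),h(y_1),\ldots,h(y_k)) \in F(\Delta)^{2k}$ and translate it into a property of the Stallings graphs $\Gamma_g$ and $\Gamma_h$ of the two images. The key reformulation is that $\im(g)$ and $\im(h)$ are conjugacy separated if and only if no non-trivial cyclically reduced word in $F(\Delta)$ labels a closed loop in both $\Gamma_g$ and $\Gamma_h$; this follows from the standard Stallings-graph description of subgroup membership and of conjugacy of hyperbolic elements (see \cite{Kapovich2002Stallings}). Generic small cancellation rules out any such common loop, since any common loop would force a long matching between segments of the $g(y_i)^{\pm 1}$ and the $h(y_j)^{\pm 1}$, contradicting $C'(\lambda)$ for small $\lambda$.

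The main obstacle is the quantification over $u \in F(\Delta)$: a naive union bound over the infinitely many conjugating elements cannot succeed. This is precisely why the Stallings-graph reformulation is essential --- it converts the infinite quantifier over $u$ into a finite combinatorial condition on two fixed finite graphs, which can then be controlled by the finite small-cancellation data carried by the generic $2k$-tuple.
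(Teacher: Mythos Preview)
Your reduction is exactly the one the paper uses: show that generically (1) both maps are injective and (2) the images are conjugacy separated, and conclude conjugacy inequivalence. The paper does not argue either point from scratch; it cites \cite{MTV} for (1) and \cite[Theorem 1]{CMV} for (2), the latter being precisely the ``trivial pullback'' statement $L^{u}\cap K=\{1\}$ for all $u$. So structurally your proposal matches the paper's proof.

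Where you diverge is in attempting to re-derive (1) and (2) from a small-cancellation hypothesis on the joint $2k$-tuple, rather than quoting \cite{MTV} and \cite{CMV}. For (1) this is fine and standard. For (2) your Stallings-graph reformulation (conjugacy separation $\Leftrightarrow$ no non-trivial cyclically reduced word labels a closed path in both core graphs) is correct and is exactly the right way to collapse the quantifier over $u$. The step that is not yet justified is the last one: ``generic small cancellation rules out any such common loop, since any common loop would force a long matching between segments of the $g(y_i)^{\pm1}$ and the $h(y_j)^{\pm1}$''. A closed path in $\Gamma_g$ need not be a subword of a single $g(y_i)^{\pm1}$; it may cross the basepoint many times and be arbitrarily long, so a naive piece-length bound from $C'(\lambda)$ does not immediately apply. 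One can make this work (essentially this is what the pullback/fibre-product analysis in \cite{CMV} does), but it requires controlling how a long path in $\Gamma_g$ decomposes into near-full petals and then arguing that matching such a decomposition against one coming from $\Gamma_h$ forces a long piece. As written, that step is a sketch rather than a proof; the paper sidesteps the issue by invoking \cite{CMV} directly.
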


Before we prove the theorem, we need to describe the way in which probabilities are computed. We fix the two alphabets $\Delta=\{x_1, \dots, x_m\}$ and $\Sigma=\{y_1, \dots, y_k\}$, $m \leq k$, and ambient free groups $F(\Delta)$ and $F(\Sigma)$, and pick $g$ and $h$ randomly by choosing $(g(x_1), \dots, g(x_m))$ and $(h(x_1), \dots, h(x_m))$ independently at random, as tuples of words of length bounded by $n$ in $F(\Sigma)$. If $\mathcal{P}$ is a property of tuples (or subgroups) of $F(\Sigma)$, we say that \emph{generically many} tuples (or finitely generated subgroups) of $F(\Sigma)$ satisfy $\mathcal{P}$ if the proportion of $m$-tuples of words of length $\leq n$ in $F(\Sigma)$ which satisfy $\mathcal{P}$ (or generate a subgroup satisfying $\mathcal{P}$), among all possible $m$-tuples of words of length $\leq n$, tends to 1 when $n$ tends to infinity.

\begin{proof}
Let $n >0$ be an integer, and let $(a_1, \dots, a_m)$ and $(b_1, \dots, b_m)$ be two tuples of words in $F(\Sigma)$ satisfying length inequalities $|a_i|\leq n$ and $|b_i| \leq n$ for all $i$. We let the maps $g, h: F(\Sigma)\rightarrow F(\Delta)$ that are part of an instance of $\PCP$ be defined as $g(x_i)=a_i$ and $h(x_i)=b_i$, and note that the images $\im(g)$ and $\im(h)$ in $F(\Delta)$ are subgroups generated by $(a_1, \dots, a_m)$ and $(b_1, \dots, b_m)$, respectively. 

We claim that among all $2m$-tuples $(a_1, \dots, a_m, b_1, \dots, b_m)$ with $|a_i|, |b_i| \leq n$, a proportion of them tending to $1$ as $n \rightarrow \infty$ satisfy (1) the subgroups $L=\langle a_1, \dots, a_m\rangle$ and $K=\langle b_1, \dots, b_m \rangle$ are both of rank $m$, and (2) for every $u \in F(\Delta)$ we have $L^u \cap K=\{1\}$. Claim (1) is equivalent to $g,h$ being generically injective, and follows from \cite{MTV}, while claim (2) is equivalent to  $\im(g)^u \cap \im(h)=\{1\}$ for every $u \in F(\Delta)$, which implies $g$ and $h$ are generically conjugacy separated, and follows from \cite[Theorem 1]{CMV}. More specifically, \cite[Theorem 1]{CMV} proves that for any tuple $(a_1, \dots, a_m)$, `almost all' (precisely computed) tuples $(b_1, \dots, b_m)$, with $|b_i| \leq n$, give subgroups $L=\langle a_1, \dots, a_m\rangle$ and $K=\langle b_1, \dots, b_m \rangle$ with trivial pullback, that is, for every $u \in F(\Delta)$, $K^u \cap L=\{1\}$. Going over all $(a_1, \dots, a_m)$ with $|a_i| \leq n$ and counting the tuples $(b_1, \dots, b_m)$ (as in \cite{CMV}) satisfying property (2) gives the genericity result for all $2m$-tuples.
\qed\end{proof}

\subsection{The $\GPCP$ for conjugacy inequivalent maps}
\label{sec:conjIn}
We now prove that the $\PCP$ implies the $\GPCPCI$ and hence that, generically, the $\PCP$ implies the $\GPCP$.
Recall that if $I_{\GPCP}$ is a specific instance of the $\GPCP$ we can associate to it a specific instance $I_{\PCP}=(\Sigma\sqcup\{B, E\},\Delta\sqcup\{B, E, \#\}, g', h')$, as in Section \ref{sec:injectivity}. We start by classifying the solutions to $I_{\PCP}$.

\begin{lemma}
\label{lem:minimalsolns}
Let $I_{\GPCP}$ be an instance of the $\GPCPI$, with associated instance $I_{\PCP}$ of the $\PCPI$.
Every solution to $I_{\PCP}$ is a product of solutions of the form $(BxE)^{\pm1}$, $E^{-1}xE$ and $BxB^{-1}$, for $x\in F(\Sigma)$.
\end{lemma}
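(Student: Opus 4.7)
The plan is to read off the claimed decomposition directly from the positions of the letters in $\{B^{\pm1},E^{\pm1}\}$ that appear in $w$. I will write $w=z_1\cdots z_k$ as a freely reduced word in $F(\Sigma\sqcup\{B,E\})$, let $i_1<\cdots<i_n$ be the indices with $z_{i_j}\in\{B^{\pm1},E^{\pm1}\}$, and set $y_j:=z_{i_j}$. The goal is to prove that $n=2m$, that $(y_{2l-1},y_{2l})\in\{B,E^{-1}\}\times\{E,B^{-1}\}$ for each $l$, and that the decomposition $w=p_1\cdots p_m$, with $p_l:=y_{2l-1}x_ly_{2l}$ and $x_l\in F(\Sigma)$ the intermediate $\Sigma^{\pm1}$-word, exhibits $w$ as a product of solutions of the claimed forms.

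The first step is to observe that the letters $B^{\pm1},E^{\pm1}$ in $g'(w)=g'(z_1)\cdots g'(z_k)$ and $h'(w)=h'(z_1)\cdots h'(z_k)$ do not cancel upon reduction: in each factor $g'(y_j)$ the relevant letter sits at an extremity, and any cancellation would force $z_{i_j\pm1}=y_j^{-1}$, contradicting the reducedness of $w$. The analogous statement holds for $h'$. Hence the $B,E$-letters in the reduced forms of $g'(w)$ and of $h'(w)$ are each in order-preserving bijection with $y_1,\ldots,y_n$, and since $g'(w)=h'(w)$ they occupy the same positions in the common reduced word.

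The heart of the argument is an alternation lemma. Call $y_j$ a \emph{start} if $y_j\in\{B,E^{-1}\}$ and an \emph{end} if $y_j\in\{E,B^{-1}\}$; inspection of the definitions gives that $g'(\text{start})$ ends with $\#$ and $h'(\text{start})$ ends with $\#^{-1}$, while dually $g'(\text{end})$ begins with $\#^{-1}$ and $h'(\text{end})$ begins with $\#$. For each $j$ the subword $\omega_j^g$ of $g'(w)$ lying strictly between the $j$-th and $(j+1)$-st $B/E$-letter must equal the corresponding subword $\omega_j^h$ of $h'(w)$. Comparing first and last letters rules out the patterns (start, start) and (end, end). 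The pattern (end, start) forces $\omega_j^g=\#^{-1}g(t_j)\#$ to equal $\omega_j^h=\#h(t_j)\#^{-1}$, where $t_j$ is the intervening $\Sigma^{\pm1}$-word; rearranging gives $g(t_j)=\#^2h(t_j)\#^{-2}$, which by normal form in $F(\Delta)*\langle\#\rangle$ forces $g(t_j)=h(t_j)=1$, and then injectivity of $g$ or $h$ (from the $\GPCPI$-hypothesis) yields $t_j=1$, i.e.\ $y_j$ and $y_{j+1}$ are adjacent in $w$. An analogous boundary analysis of the regions outside the first and last $B/E$-letter shows $i_1=1$, $i_n=k$, $y_1$ is a start, and $y_n$ is an end. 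Thus $y_1,\ldots,y_n$ alternates (start, end, start, end, \ldots), giving $w=p_1\cdots p_m$ with each $p_l$ of the form $BxE$, $BxB^{-1}$, $E^{-1}xE$, or $E^{-1}xB^{-1}=(BxE)^{-1}$.

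Finally, each $p_l$ is itself a solution. The factorisation $g'(w)=g'(p_1)\cdots g'(p_m)$ has no cancellation at the boundaries (the meeting letters are $E\cdot B$ or $B^{-1}\cdot E^{-1}$, never inverses), and likewise for $h'$. Matching the positions of the $2m$ common $B/E$-letters in $g'(w)=h'(w)$ against the cumulative lengths of $g'(p_1),\ldots,g'(p_{l-1})$ and of $h'(p_1),\ldots,h'(p_{l-1})$ successively yields $|g'(p_l)|=|h'(p_l)|$ for every $l$, whence $g'(p_l)$ and $h'(p_l)$ are equal-length subwords of the common reduced word at the same position, hence equal. The main technical obstacle is the case analysis in the alternation step: the deliberate asymmetry between the trailing $\#$ of $g'(\text{start})$ and the trailing $\#^{-1}$ of $h'(\text{start})$ (and dually at the start of $g'(\text{end})$ and $h'(\text{end})$) is exactly what enforces the alternating start-end pattern, and verifying it requires care in edge cases where some $u_i$ or $v_i$ is trivial.
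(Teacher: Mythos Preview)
Your overall strategy coincides with the paper's: track the markers $B^{\pm1},E^{\pm1}$ through $g'$ and $h'$, show they survive reduction, compare the intermarker subwords, and peel off the start/end pairs. The alternation analysis and the final verification that each $p_l$ is a solution are fine.

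The genuine gap is in your first step. You assert that the markers in $g'(z_1)\cdots g'(z_k)$ cannot cancel because ``any cancellation would force $z_{i_j\pm1}=y_j^{-1}$''. This is false for the non-injective map. If, say, $g$ is not injective and $t\in\ker(g)\setminus\{1\}$, then $g'(t)=\#^{-1}g(t)\#=1$, so in $g'(B^{-1}tB)$ the two markers do cancel even though $z_{i_j-1}\in\Sigma^{\pm1}$ and $w=B^{-1}tB$ is reduced. Your argument only controls the immediately adjacent letter in $w$, but a whole $\Sigma$-block with trivial image can sit between two inverse markers and let them meet.

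The fix, and this is exactly where the paper uses the $\GPCPI$ hypothesis, is to argue non-cancellation for the \emph{injective} map first. If $g$ is injective and two consecutive markers $y_l,y_{l+1}$ were to cancel in $g'(w)$, then the intermarker block (which reduces to $\#u\,g(t_l)\,u^{-1}\#^{-1}$ or a variant) must be trivial, forcing $g(t_l)=1$, hence $t_l=1$, hence $y_ly_{l+1}$ is a cancelling pair in $w$, a contradiction. Once you know the injective side retains all $n$ markers, the equality $g'(w)=h'(w)$ forces the other side to retain all $n$ markers as well (the unreduced $h'(w)$ has exactly $n$ markers and reduction can only remove them in pairs). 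After this repair, the rest of your argument goes through.
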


Lemma \ref{lem:minimalsolns} is proven in the appendix.
We now have:

\begin{theorem}
\label{thm:PCPvsGPCP2}
Let $I_{\GPCP}=(\Sigma, \Delta, g, h, u_1, u_2, v_1, v_2)$ be an instance of the $\GPCPI$, such that there is no non-trivial $x\in F(\Sigma)$ with $u_1g(x)u_1^{-1}=v_1h(x)v_1^{-1}$ or $u_2^{-1}g(x)u_2=v_2^{-1}h(x)v_2$. Then $I_{\GPCP}$ has a solution (possibly trivial) if and only if the associated instance $I_{\PCP}$ of the $\PCPI$ has a non-trivial solution.
\end{theorem}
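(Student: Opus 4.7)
My plan is to handle the two directions separately, with the forward direction being essentially immediate and the backward direction being where the two conjugacy-style hypotheses come into play.

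For the forward direction, suppose $y\in F(\Sigma)$ is a (possibly trivial) solution to $I_{\GPCP}$. Then by Lemma \ref{lem:solnforGPCP}, the word $ByE$ is a solution to $I_{\PCP}$; and since $B$ and $E$ are distinct generators of $F(\Sigma\sqcup\{B,E\})$, the word $ByE$ is non-trivial in this free group regardless of whether $y$ is trivial. This direction does not use the conjugacy hypothesis at all.

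For the backward direction, I would begin with a non-trivial solution $w$ of $I_{\PCP}$ and apply Lemma \ref{lem:minimalsolns} to write $w=w_1w_2\cdots w_k$, where each factor $w_i$ is itself a solution of $I_{\PCP}$ of one of the three forms $(Bx_iE)^{\pm 1}$, $E^{-1}x_iE$, or $Bx_iB^{-1}$, for some $x_i\in F(\Sigma)$. The next step, which I would present as a short computation, is to rewrite what it means for these special atoms to be solutions. Using the explicit definitions of $g'$ and $h'$ from Section \ref{sec:injectivity}, I would show
\[
g'(Bx_iB^{-1})=h'(Bx_iB^{-1})\ \Longleftrightarrow\ u_1g(x_i)u_1^{-1}=v_1h(x_i)v_1^{-1},
\]
\[
g'(E^{-1}x_iE)=h'(E^{-1}x_iE)\ \Longleftrightarrow\ u_2^{-1}g(x_i)u_2=v_2^{-1}h(x_i)v_2.
\]
The hypothesis of the theorem then forces $x_i=1$ in each such atom, so these factors collapse to $BB^{-1}=1$ or $E^{-1}E=1$ and contribute nothing to $w$.

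Since $w$ is non-trivial, at least one atom $w_i$ must be non-trivial; by the previous paragraph it must be of the form $(Bx_iE)^{\pm 1}$. Because $g'$ and $h'$ are group homomorphisms, $Bx_iE$ is itself a solution to $I_{\PCP}$, and Lemma \ref{lem:solnforGPCP} then gives that $x_i$ is a solution to $I_{\GPCP}$ (possibly trivial, which is permitted by the statement). This finishes the backward direction. The only genuinely delicate part of the argument is the computation identifying the ``bad'' atom types with the excluded conjugacy-type equations, but this is a routine simplification using the definitions of $g'$ and $h'$ and the fact that $\#$ cancels correctly between consecutive images; the real conceptual content has already been packaged into Lemma \ref{lem:minimalsolns}.
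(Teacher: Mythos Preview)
Your proposal is correct and follows essentially the same approach as the paper: both directions invoke Lemma~\ref{lem:solnforGPCP}, and the backward direction rests on Lemma~\ref{lem:minimalsolns} together with the observation that the hypotheses rule out non-trivial atoms of the form $BxB^{-1}$ or $E^{-1}xE$. Your write-up is in fact slightly more explicit than the paper's, since you spell out the equivalences $g'(BxB^{-1})=h'(BxB^{-1})\Leftrightarrow u_1g(x)u_1^{-1}=v_1h(x)v_1^{-1}$ and $g'(E^{-1}xE)=h'(E^{-1}xE)\Leftrightarrow u_2^{-1}g(x)u_2=v_2^{-1}h(x)v_2$, whereas the paper asserts this correspondence without the computation.
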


\begin{proof}
By Lemma \ref{lem:solnforGPCP}, if $I_{\GPCP}$ has a solution then $I_{\PCP}$ has a non-trivial solution. For the other direction, note that the assumptions in the theorem are equivalent to $I_{\GPCP}$ having no solutions of the form $BxB^{-1}$ or $E^{-1}xE$, and so by Lemma \ref{lem:minimalsolns}, every non-trivial solution to $I_{\GPCP}$ has the form $Bx_1E\cdots Bx_nE$ for some $x_i\in F(\Sigma)$.
The $Bx_iE$ subwords block this word off into chunks, and we see that each such word is a solution to $I_{\PCP}$. By Lemma \ref{lem:solnforGPCP}, each $x_i$ is a solution to $I_{\GPCP}$.
Hence, if $I_{\PCP}$ has a non-trivial solution then $I_{\GPCP}$ has a solution.
\qed\end{proof}

Theorem \ref{thm:PCPvsGPCP2} depends both on the maps $g$ and $h$ and on the constants $u_i$, $v_i$. The definition of conjugacy inequivalent maps implies that the conditions of Theorem \ref{thm:PCPvsGPCP2} hold always, independent of the $u_i$, $v_i$. We therefore have:

\begin{theorem}\leavevmode
The following hold in finitely generated free groups.
\label{thm:ConjInequivalentBODY}
\begin{enumerate}
\item $\PCPoneI(n+2)\Rightarrow\GPCPConeI(n)$
\item $\PCPtwoI(n+2)\Rightarrow\GPCPCtwoI(n)$
\end{enumerate}
\end{theorem}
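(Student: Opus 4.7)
The plan is to take an instance $I_{\GPCP}=(\Sigma, \Delta, g, h, u_1, u_2, v_1, v_2)$ of the $\GPCPConeI(n)$ (or $\GPCPCtwoI(n)$), apply the construction from Section \ref{sec:injectivity} to produce the associated PCP instance $I_{\PCP}$ on the alphabet $\Sigma\sqcup\{B,E\}$, and then invoke Theorem \ref{thm:PCPvsGPCP2} to reduce deciding $I_{\GPCP}$ to deciding $I_{\PCP}$. The alphabet enlargement of $\Sigma$ by $\{B,E\}$ accounts for the shift from $n$ to $n+2$, while Lemma \ref{lem:injectivity} ensures that the injectivity type of $(g,h)$ is inherited by $(g',h')$, so $I_{\PCP}$ is an instance of the appropriate $\PCPoneI(n+2)$ or $\PCPtwoI(n+2)$.

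The core step is to show that conjugacy inequivalence of $(g,h)$ implies the two hypotheses of Theorem \ref{thm:PCPvsGPCP2}. The identity $u_1g(x)u_1^{-1}=v_1h(x)v_1^{-1}$ rearranges to $g(x)=(v_1^{-1}u_1)^{-1}h(x)(v_1^{-1}u_1)$, so by setting $u=v_1^{-1}u_1$, conjugacy inequivalence rules out any non-trivial $x$. Similarly $u_2^{-1}g(x)u_2=v_2^{-1}h(x)v_2$ rearranges to $g(x)=(v_2u_2^{-1})^{-1}h(x)(v_2u_2^{-1})$, and conjugacy inequivalence with $u=v_2u_2^{-1}$ again forbids a non-trivial $x$. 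Hence Theorem \ref{thm:PCPvsGPCP2} applies: $I_{\GPCP}$ has a (possibly trivial) solution if and only if $I_{\PCP}$ has a non-trivial solution.

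The main obstacle is that Theorem \ref{thm:PCPvsGPCP2} is phrased in terms of possibly-trivial solutions to $I_{\GPCP}$, whereas the $\GPCP$ asks for a non-trivial $x$. I plan to resolve this by observing that the trivial solution $x=1$ exists exactly when $u_1u_2=v_1v_2$, which can be tested directly. If $u_1u_2\neq v_1v_2$, then every solution is non-trivial and the PCP oracle settles the question. If $u_1u_2=v_1v_2$, then for any hypothetical non-trivial solution $y$, the equation $u_1g(y)u_2=v_1h(y)v_2$ combined with $v_1^{-1}u_1=v_2u_2^{-1}$ yields $g(y)=w^{-1}h(y)w$ with $w=v_2u_2^{-1}$, contradicting conjugacy inequivalence; therefore the algorithm can safely return ``no'' in this case without consulting the oracle.

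Putting these pieces together gives the following reduction: on input $I_{\GPCP}$, first test whether $u_1u_2=v_1v_2$; if so, output ``no''; otherwise, build $I_{\PCP}$ as in Section \ref{sec:injectivity} and return the answer of the $\PCPoneI(n+2)$ (respectively $\PCPtwoI(n+2)$) oracle on $I_{\PCP}$. This establishes both implications of the theorem simultaneously, since the construction, Lemma \ref{lem:injectivity}, and the above argument all respect whether one or both of $g,h$ are injective.
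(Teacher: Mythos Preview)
Your argument is correct and follows the same route as the paper: verify that conjugacy inequivalence forces the two hypotheses of Theorem~\ref{thm:PCPvsGPCP2}, and then read off the implication, with Lemma~\ref{lem:injectivity} handling the injectivity bookkeeping and the passage from $n$ to $n+2$.

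Where you go beyond the paper is in your treatment of the ``possibly trivial'' clause in Theorem~\ref{thm:PCPvsGPCP2}. The paper simply asserts the theorem as an immediate consequence, whereas you notice that Theorem~\ref{thm:PCPvsGPCP2} characterises when $I_{\GPCP}$ has \emph{some} solution, while the $\GPCP$ asks for a \emph{non-trivial} one. Your case split on whether $u_1u_2=v_1v_2$ cleanly closes this gap: when $u_1u_2\neq v_1v_2$ the trivial element is not a solution and Theorem~\ref{thm:PCPvsGPCP2} applies verbatim; when $u_1u_2=v_1v_2$, your observation that any non-trivial solution would yield $g(y)=w^{-1}h(y)w$ with $w=v_2u_2^{-1}=v_1^{-1}u_1$, contradicting conjugacy inequivalence, lets you answer ``no'' outright. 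This extra paragraph is a genuine improvement in rigour over the paper's one-line deduction, at essentially no cost.
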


Removing the injectivity assumptions gives the following corollary; the implication $\PCP\Rightarrow\GPCPCI$ of Theorem \ref{thm:connections} follow immediately.

\begin{corollary}
\label{corol:ConjInequivalentBODY}
$\PCP(n+2)\Rightarrow\GPCPCI(n)$
\end{corollary}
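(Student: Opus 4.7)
The plan is to reduce this corollary from Theorem \ref{thm:ConjInequivalentBODY} via the injectivity dichotomy supplied by Proposition \ref{prop:PCPnoI}, mirroring the proof of Corollary \ref{corol:mainBODY}. Since $\PCPI(n+2)=\PCPoneI(n+2)\cup\PCPtwoI(n+2)$ and, from Table \ref{table:definitions}, $\GPCPCI(n)=\GPCPConeI(n)\cup\GPCPCtwoI(n)$, taking the union of parts (1) and (2) of Theorem \ref{thm:ConjInequivalentBODY} yields at once the injective reduction $\PCPI(n+2)\Rightarrow\GPCPCI(n)$.

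Next I would appeal to Proposition \ref{prop:PCPnoI} — whose substance is that the $\PCPnoI$ is trivially decidable by Lemma \ref{lem:CommonKer}, so that on any instance where injectivity of both maps fails (and failure of injectivity is detectable via Stallings' foldings) the answer is automatically ``yes''. This gives $\PCP(n+2)\Leftrightarrow\PCPI(n+2)$, and chaining the two reductions yields $\PCP(n+2)\Rightarrow\PCPI(n+2)\Rightarrow\GPCPCI(n)$, which is the claim.

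I expect no real obstacle: the substantive content sits inside Theorem \ref{thm:ConjInequivalentBODY}, and the corollary is a short bookkeeping step that removes the injectivity hypothesis on the source problem. The only thing to verify is that the set-theoretic decomposition $\GPCPCI(n)=\GPCPConeI(n)\cup\GPCPCtwoI(n)$ on the target side matches the union of conclusions appearing in parts (1) and (2) of Theorem \ref{thm:ConjInequivalentBODY}, which is immediate from the definitions in Table \ref{table:definitions}.
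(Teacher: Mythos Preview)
Your proposal is correct and follows essentially the same approach as the paper: combine the two parts of Theorem \ref{thm:ConjInequivalentBODY} to obtain $\PCPI(n+2)\Rightarrow\GPCPCI(n)$, then invoke Proposition \ref{prop:PCPnoI} to replace $\PCPI(n+2)$ by $\PCP(n+2)$. The paper's proof is just a terser version of what you wrote.
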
 

\begin{proof}
Theorem \ref{thm:ConjInequivalentBODY} gives us that $\PCPI(n+2)\Rightarrow\GPCPCI(n)$, while the $\PCP(n)$ and $\PCPI(n)$ are equivalent by Proposition \ref{prop:PCPnoI}.
\qed\end{proof}

\section{The Basis Problem and Stallings' Rank Problem}
\label{sec:Stallings}
In this section we link the Basis Problem to Stallings' Rank Problem.
Clearly the Basis Problem solves the Rank Problem, as the rank is simply the size of the basis.
We prove that these problems are equivalent, with Lemma \ref{lem:Stallings} providing the non-obvious direction of the equivalence.
Combining this equivalence with Corollary \ref{corol:mainBODY} gives: $\ERP \Rightarrow\GPCPI\Rightarrow\PCP$.

The proof of Lemma \ref{lem:Stallings} is based on the following construction of Goldstein--Turner, which they used to prove that $\eq(g, h)$ is finitely generated \cite{Goldstein1986Fixed}.
Let $F=F(\Delta)$ be a finitely generated free group, let $H$ be a subgroup with basis $\mathcal{N}=\{\alpha_1, \ldots, \alpha_k\}$ of $F$, where $\alpha_i \in F$, and let $\phi: H\rightarrow F$ be a homomorphism. The \emph{derived graph $D_{\phi}$ of $\phi$ relative to $\mathcal{N}$} is the graph with vertex set $F$, and directed edges labeled by elements of $\mathcal{N}$ according to the rule:
if $u=\phi(\alpha_i)v\alpha_i^{-1}$ then connect $u$ to $v$ by an edge with initial vertex $u$, end vertex $v$, and label $\alpha_i$.
We shall use the fact that a word $w(\alpha_1, \ldots, \alpha_k)$ is fixed by $\phi$ if and only if the path which starts at the vertex ${\epsilon}$ (the identity of $F$) and is labeled by  $w(\alpha_1, \ldots, \alpha_k)$ is a loop, so also ends at ${\epsilon}$.

\begin{lemma}
\label{lem:Stallings}
There exists an algorithm with input an instance $I=(\Sigma, \Delta, g, h)$ of the $\PCPI$ and the rank $\rk(\eq(g, h))$ of the equaliser of $g$ and $h$, and output a basis for $\eq(g, h)$.
\end{lemma}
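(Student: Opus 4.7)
The plan is to use the Goldstein--Turner derived graph introduced just before the lemma, together with the given rank $r$ as a stopping criterion. Without loss of generality assume $h$ is injective, and set $\phi:=g\circ h^{-1}:\im(h)\to F(\Delta)$; then $\eq(g,h)=h^{-1}(\fix(\phi))$, so producing a basis of $\fix(\phi)$ and pulling it back through $h^{-1}$ suffices. First I would compute, via Stallings foldings on $\{h(y_1),\dots,h(y_{|\Sigma|})\}$, a basis $\mathcal{N}=\{\alpha_1,\dots,\alpha_k\}$ of $\im(h)$ together with the freely reduced words $\phi(\alpha_i)\in F(\Delta)$; this data lets us generate edges of $D_{\phi}$ on demand.

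Next, explore $D_{\phi}$ by breadth-first search from $\epsilon$: at stage $n$, let $G_n$ be the induced subgraph on vertices within distance $n$ of $\epsilon$, and let $G_n^{\mathrm{core}}$ be the result of iteratively deleting degree-one non-basepoint vertices. Since $D_{\phi}$ is $\mathcal{N}$-folded, so is each $G_n$, and the subgroup $K_n:=\pi_1(G_n^{\mathrm{core}},\epsilon)\le F(\mathcal{N})$ has rank equal to the first Betti number $E-V+1$ of $G_n^{\mathrm{core}}$. By the loop/fixed-point correspondence recalled in the excerpt, $K_n\le\fix(\phi)$; moreover, letting $C$ denote the connected component of $\epsilon$ in $D_{\phi}$, the inclusion of subgraphs induces an injection $H_1(G_n)\hookrightarrow H_1(C)\cong\mathbb{Z}^r$, which forces $\rk(K_n)\le r$. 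Halt as soon as $\rk(K_n)=r$, choose a spanning tree $T$ of $G_n^{\mathrm{core}}$, and read off a basis of $K_n$ by assigning to each non-tree edge $u\xrightarrow{\alpha_i}v$ the word $w_u\cdot\alpha_i\cdot w_v^{-1}$, where $w_u$ is the label of the unique path in $T$ from $\epsilon$ to $u$; finally apply $h^{-1}$ to each basis element to obtain a basis of $\eq(g,h)$.

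Termination is easy: any finite basis of $\fix(\phi)$ corresponds to finitely many loops at $\epsilon$ of bounded length in $D_{\phi}$, so for $n$ large enough $G_n$ contains them all and $K_n=\fix(\phi)$, giving $\rk(K_n)=r$. The main obstacle is correctness of the halting condition, since a priori $K_n\le\fix(\phi)$ with $\rk(K_n)=r$ need not force $K_n=\fix(\phi)$. To handle this I would show that $G_n^{\mathrm{core}}$ actually sits inside the image $\iota(\Lambda)\subseteq C$ of the Stallings graph $\Lambda$ of $\fix(\phi)$. The key observation is that $\iota(\Lambda)\hookrightarrow C$ is a $\pi_1$-isomorphism, so $C$ deformation-retracts onto $\iota(\Lambda)$ and $C\setminus\iota(\Lambda)$ is a disjoint union of trees attached at single vertices; iterative pendant removal in the finite ball $G_n$ excises exactly these tree parts, leaving $G_n^{\mathrm{core}}\subseteq \iota(\Lambda)$. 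A short Euler-characteristic argument then shows that if $G_n^{\mathrm{core}}\subseteq \iota(\Lambda)$ are finite connected graphs of equal $\pi_1$-rank and $\iota(\Lambda)$ has no non-basepoint pendants (a standard property of Stallings graphs), then $G_n^{\mathrm{core}}=\iota(\Lambda)$, whence $K_n=\fix(\phi)$ and the output is indeed a basis of $\eq(g,h)$.
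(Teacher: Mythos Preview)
Your proposal is correct and follows essentially the same approach as the paper's: reduce to computing $\fix(\phi)$ for $\phi$ the composite of one map with the inverse of the injective one, explore the Goldstein--Turner derived graph outward from $\epsilon$, prune pendants, and halt once the first Betti number reaches the given rank $r$. The paper's correctness argument is a touch terser---it simply notes that every reduced loop at $\epsilon$ in $D_\phi$ lies in the core $\Gamma_\phi$, so the pruned graph is a subgraph of $\Gamma_\phi$ of the same rank and hence equals $\Gamma_\phi$---whereas you unpack the same point via the tree structure of $C\setminus\iota(\Lambda)$; the content is identical.
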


The following shows that Stallings' Rank Problem is equivalent to the $\BP$.

\begin{theorem}\leavevmode
The following hold in finitely generated free groups.
\label{thm:Stallings}
\begin{enumerate}
\item $\BPoneI(n+2)\Leftrightarrow\RPoneI(n)$
\item $\BPtwoI(n+2)\Leftrightarrow\RPtwoI(n)$
\end{enumerate}
\end{theorem}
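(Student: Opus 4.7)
The plan is to prove both directions of the stated equivalences. The forward direction $\BP\Rightarrow\ERP$ is immediate: given an instance $I=(\Sigma,\Delta,g,h)$ of the $\ERP$, treat it as an instance of the $\BP$, compute a basis $B$ of $\eq(g,h)$, and return $|B|$. The $n+2$-vs-$n$ indexing in the statement is chosen to chain cleanly with Corollary \ref{corol:mainBODY}; the substantive content is simply the equivalence of the two problems.

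The reverse direction is the content of Lemma \ref{lem:Stallings}; my plan is to use the Goldstein--Turner derived graph recalled in the preamble. Assume without loss of generality that $h$ is injective and set $\phi=g\circ h^{-1}$ on $H:=h(F(\Sigma))$ with basis $\mathcal{N}=\{h(y_1),\ldots,h(y_n)\}$, so that $\eq(g,h)\cong\fix(\phi)$ via $h$. Build the derived graph $D_\phi$: this is a Stallings-folded labelled graph whose fundamental group based at $\epsilon$ is exactly $\fix(\phi)$, free of rank $r:=\rk(\eq(g,h))$, and whose $\epsilon$-core $C$---obtained by iteratively pruning degree-one vertices other than $\epsilon$---is a finite subgraph of $D_\phi$ with first Betti number $r$.

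Algorithmically, I would enumerate an exhausting chain of finite subgraphs $D_1\subseteq D_2\subseteq\cdots$ of $D_\phi$ containing $\epsilon$ (for instance the successive ball neighbourhoods of $\epsilon$), and for each $D_i$ compute its $\epsilon$-core $C_i$ by standard pruning, letting $r_i$ denote the first Betti number of $C_i$. Fed the value of $r$ from the $\ERP$, halt at the first index where $r_i=r$ and read off a basis of $\eq(g,h)$ from the petal loops at $\epsilon$ of $C_i$, pulled back through $h^{-1}$.

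The main obstacle---and the place where knowing $r$ is used essentially---is verifying correctness of the stopping criterion. Since $D_\phi$ is Stallings-folded, every reduced closed path at $\epsilon$ in $D_\phi$ is contained in $C$, so in particular every $C_i$ is a subgraph of $C$ and $r_i\leq r$ for all $i$. Moreover $C$ is minimal among subgraphs of $D_\phi$ carrying all reduced closed paths at $\epsilon$, so any proper subgraph $C_i\subsetneq C$ containing $\epsilon$ has strictly smaller first Betti number than $r$. Hence the equality $r_i=r$ certifies that $C_i=C$, and its petal loops freely generate $\pi_1(C,\epsilon)=\fix(\phi)$, giving the desired basis.
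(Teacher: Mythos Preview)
Your proposal is correct and follows essentially the same route as the paper. Both directions match: the forward direction is immediate (basis size gives rank), and for the reverse direction you reproduce the content of Lemma \ref{lem:Stallings} via the Goldstein--Turner derived graph $D_\phi$, growing finite pieces of it and halting once the first Betti number of the pruned piece equals the given rank $r$. The paper's appendix proof does the same, phrased as a BFS that terminates when $|E|-|V|=r-1$ and then prunes; your ball-exhaustion variant is an equivalent implementation. Your justification of the stopping criterion (every $C_i$ sits inside the $\epsilon$-core $C$, and a proper connected subgraph of a core graph has strictly smaller Betti number) is exactly the fact the paper invokes in its final sentence, so the arguments coincide. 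Your remark that the $n{+}2$ versus $n$ indexing is cosmetic is also in line with the paper, whose proof of this theorem ignores alphabet size entirely and whose Corollary \ref{corol:Stallings} records the equivalence as $\BP(n)\Leftrightarrow\ERP(n)$.
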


\begin{proof}
Let $I_{\PCP}$ be an instance of the $\PCPI$.
As the rank of a free group is precisely the size of some (hence any) basis for it, if we can compute a basis for $\eq(g, h)$ then we can compute the rank of $\eq(g, h)$.
On the other hand, by Lemma \ref{lem:Stallings} if we can compute the rank of $\eq(g,h)$ then we can compute a basis of $\eq(g, h)$.
\qed\end{proof}

We therefore have:

\begin{corollary}
\label{corol:Stallings}
$\BP(n)\Leftrightarrow\ERP(n)$
\end{corollary}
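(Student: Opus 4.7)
The plan is to deduce the corollary directly from Theorem \ref{thm:Stallings}. By definition, both the Basis Problem and the Rank Problem are only posed on instances of the $\PCPI$, that is, on instances where at least one of $g, h$ is injective. Such instances partition into the two disjoint classes captured by $\BPoneI \cup \BPtwoI = \BP$ and $\RPoneI \cup \RPtwoI = \ERP$. Injectivity of a free group homomorphism is decidable via Stallings' foldings, so we can algorithmically sort any given input into the appropriate subclass; hence $\BP$ is soluble if and only if both $\BPoneI$ and $\BPtwoI$ are, and analogously for $\ERP$.

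Applying the two items of Theorem \ref{thm:Stallings} in turn then yields $\BPoneI \Leftrightarrow \RPoneI$ and $\BPtwoI \Leftrightarrow \RPtwoI$; combining these two equivalences gives $\BP \Leftrightarrow \ERP$. For the alphabet-size parameter $n$, the forward implication uses only that the rank of a free group equals the size of any basis, so the source alphabet $\Sigma$ is unchanged; for the reverse implication, Lemma \ref{lem:Stallings} takes an instance $(\Sigma, \Delta, g, h)$ together with $\rk(\eq(g,h))$ and outputs a basis for $\eq(g,h) \leq F(\Sigma)$, once again without altering $\Sigma$.

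There is no substantive obstacle here: the corollary is essentially a repackaging of Theorem \ref{thm:Stallings} across the injectivity subcases, relying only on the routine fact that one can decide in advance whether an input belongs to $\PCPoneI$ or $\PCPtwoI$.
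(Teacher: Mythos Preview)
Your proposal is correct and follows the same route as the paper: the corollary is stated there without a separate proof, preceded only by ``We therefore have'', so it is treated as an immediate consequence of Theorem \ref{thm:Stallings}. Your version simply makes explicit the combination of the two injectivity subcases and the observation (via Lemma \ref{lem:Stallings}) that the source alphabet $\Sigma$ is unchanged, which the paper leaves implicit.
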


\section{Main results, part 2}
\label{sec:MainTwo}
We now combine results from the previous sections to the following result, from which Theorem \ref{thm:connections} follows immediately.

\begin{theorem}
\label{thm:connectionsBODY}
In finitely generated free groups the following implications hold.
\[
\begin{tikzcd}
\ERP(n)\\
\BP(n) \arrow[Rightarrow]{r} \arrow[Rightarrow]{d} \arrow[Leftrightarrow]{u}
&
\GPCPI(n) \arrow[Rightarrow]{r}
&
\PCP(n-2)\arrow[Rightarrow]{r}
&
\GPCPCI(n)\\
\CPCPI(n)
\end{tikzcd}
\]
\end{theorem}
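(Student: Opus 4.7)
The plan is to assemble the diagram as a direct consequence of the implications established in Sections \ref{sec:injectivity}--\ref{sec:Stallings}; no fresh argument is needed, so the only work is to read each arrow off from the appropriate earlier result and track the alphabet-size shifts.

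For the left-hand double arrow I would invoke Corollary \ref{corol:Stallings}, which gives $\BP(n)\Leftrightarrow\ERP(n)$ via the Goldstein--Turner derived-graph construction underlying Lemma \ref{lem:Stallings}. The downward arrow $\BP(n)\Rightarrow\CPCPI(n)$ is then Theorem \ref{thm:rationalBODY}: a basis for $\eq(g,h)$ produces a finite-state automaton (a ``core graph'') recognising $\eq(g,h)$, and the standard product construction with an automaton for the input rational constraint $\mathcal{R}$ reduces $\CPCPI$ to testing non-triviality of a rational subset of $F(\Sigma)$.

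For the horizontal chain I would combine Corollary \ref{corol:mainBODY}, which gives $\BP(n+2)\Rightarrow\GPCPI(n)\Rightarrow\PCP(n)$, with Corollary \ref{corol:ConjInequivalentBODY}, which gives $\PCP(n+2)\Rightarrow\GPCPCI(n)$. The first chain routes through the extreme-letter-restricted problem $\ELPCPI$ of Section \ref{sec:injectivity} together with Theorem \ref{thm:rationalBODY}, and then uses the trivial specialisation of the $\GPCP$ to the $\PCP$ obtained by choosing empty constants $u_i, v_i$. The second uses Lemma \ref{lem:minimalsolns} together with Theorem \ref{thm:PCPvsGPCP2}: the conjugacy-inequivalence hypothesis is precisely what rules out the parasitic solution types $BxB^{-1}$ and $E^{-1}xE$ in the associated instance $I_{\PCP}$, leaving only products of $(BxE)^{\pm 1}$ blocks which correspond back to genuine solutions of $I_{\GPCP}$.

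The main (and only) obstacle is bookkeeping: each passage through the construction of Section \ref{sec:injectivity} enlarges the source alphabet by the two extra letters $B$ and $E$, so the $\pm 2$ shifts coming out of Theorems \ref{thm:GPCP}, \ref{thm:ConjInequivalentBODY} and \ref{thm:Stallings} must be composed carefully around the diagram to match the parameters displayed on each node. Once the shifts are tracked and the injectivity hypotheses are stripped via Proposition \ref{prop:PCPnoI}, the statement follows by a diagram-chase through the named corollaries.
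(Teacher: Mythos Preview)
Your proposal is correct and follows essentially the same approach as the paper: the paper's own proof is precisely a four-line list citing Corollary \ref{corol:Stallings}, Theorem \ref{thm:rationalBODY}, Corollary \ref{corol:mainBODY}, and Corollary \ref{corol:ConjInequivalentBODY}, and you invoke exactly these (with some added expository colour about the mechanisms behind them). Your remark that the only genuine work is the $\pm2$ bookkeeping matches the paper's treatment.
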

\begin{proof} The proof is a summary of the results already established in the rest of the paper, and we give a schematic version of it here.

$\ERP(n)\Leftrightarrow\BP(n)$ holds by Corollary \ref{corol:Stallings}.

$\BP(n)\Rightarrow\CPCPI(n)$ holds by Theorem \ref{thm:rationalBODY}.

$\BP(n) \Rightarrow\GPCPI(n)\Rightarrow\PCP(n)$ holds by Corollary \ref{corol:mainBODY}.

$\PCP(n-2)\Rightarrow\GPCPCI(n)$ holds by Corollary \ref{corol:ConjInequivalentBODY}.
\qed\end{proof}

\bibliographystyle{splncs04}
\bibliography{BibTexBibliography}

\newpage

\section*{Appendix: Additional material to Sections \ref{sec:CI} \& \ref{sec:Stallings}}

\begin{proof}[Lemma \ref{lem:minimalsolns}]
Let $x'$ be a solution to $I_{\PCP}$, and decompose it as a freely reduced word
\[
x_0\alpha_1x_1\alpha_2\cdots x_{n-1}\alpha_{n}x_n
\]
for $x_i\in F(\Sigma)$ and $\alpha_i\in\{B, E\}^{\pm1}$.

We shall refer to the letters $B^{\pm1}, E^{\pm1}$ as \emph{markers}. Indeed, they act as ``separators'' in the word $x'$, because in the definitions of $g'$ and $h'$ the letter $B$ only occurs in $g'(B)$ and $h'(B)$, and the letter $E$ only occurs in $g'(E)$ and $h'(E)$; thus either the $B^{\pm1}$ and $E^{\pm1}$ terms in $x'$ are preserved under application of $g'$ and $h'$, or two of these letters cancel in the image.
Therefore, the parts of each of $g'(\alpha_{i}x_i\alpha_{i+1})$ and $h'(\alpha_{i}x_i\alpha_{i+1})$ lying between the markers must be equal, or otherwise there exist some indices $i, j$ such that both $g'(\alpha_{i}x_i\alpha_{i+1})$ and $h'(\alpha_{j}x_j\alpha_{j+1})$ are contained in $F(\Delta)$: this collapsing must happen in each image as the numbers of $B^{\pm1}$ and $E^{\pm1}$ terms must be equal in both images.
By inspecting the maps, we see that this collapsing requires $x_{i}\in\ker(g')$ and $\alpha_{i}=\alpha_{i+1}^{-1}$, and $x_{j}\in\ker(h')$ and $\alpha_{j}=\alpha_{j+1}^{-1}$.
However, by Lemma \ref{lem:injectivity} only one of $g'$ or $h'$ is injective, $g'$ say, so then $x_{i}$ is trivial, and so $\alpha_{i}x_{i}\alpha_{i+1}$ is empty, contradicting the decomposition being freely reduced.
Therefore, we have that for all $1\leq i< n$ the parts of each of $g'(\alpha_{i}x_i\alpha_{i+1})$ and $h'(\alpha_{i}x_i\alpha_{i+1})$ lying between the markers $B^{\pm1}, E^{\pm1}$ are equal.
This also implies that the parts of $g'(x_0\alpha_1)$ and $h'(x_0\alpha_1)$ lying before the markers are equal.

We next prove that $x_0$ is empty, $\alpha_1\in\{B, E^{-1}\}$, $\alpha_2\in\{B^{-1}, E\}$ (so in particular, non-empty), and that $\alpha_1x_1\alpha_2$ is a solution to $I_{\PCP}$.

Suppose $x_0$ is non-empty. Then either $g'(x_0\alpha_1)$ starts with $\#^{-1}$ or $E$ (when $u_2$ is empty), or $x_0\in\ker g'$ and the image starts with $B$ or $E^{-1}$, while either $h'(x_0\alpha_1)$ starts with $\#$ or $B^{-1}$ (when $v_1$ is empty), or $x_0\in\ker h'$ and the image starts with $B$ or $E^{-1}$.
As one map is injective, by Lemma \ref{lem:injectivity}, we have that $g'(x_0\alpha_1)$ and $h'(x_0\alpha_1)$ start with different letters, contradicting the above paragraph.
Hence, $x_0$ is empty.
We then have that $\alpha_1\in\{B, E^{-1}\}$, as this is the only way that the parts of $g'(\alpha_1)$ and $h'(\alpha_1)$ lying before the marker can be equal.

Suppose $\alpha_2$ is empty, so $x'=\alpha_1x_1$. Then as $x'^{-1}$ is also a solution to $I_{\PCP}$, we also have that $x_1$ is empty, and so $x'=\alpha_1\in\{B, E^{-1}\}$. However, neither $B$ nor $E^{-1}$ is a solution to $I_{\PCP}$, a contradiction.
Hence, $\alpha_2$ is non-empty.
Moreover, $\alpha_2\in\{B^{-1}, E\}$, as otherwise the parts of each of $g'(\alpha_{1}x_1\alpha_2)$ and $h'(\alpha_{1}x_1\alpha_2)$ lying between the markers are non-equal (for example, $g'(Bx_1B)=B\#u_1g(x_1)B\cdots$ while $h'(Bx_1B)=B\#v_1h(x_1)\#^{-1}B\cdots$).
Finally, under these restrictions on $\alpha_{1}$ and $\alpha_{2}$, and because the parts of $g'(\alpha_{1}x_1\alpha_2)$ and $h'(\alpha_{1}x_1\alpha_2)$ lying between the markers are equal, we get that $\alpha_{1}x_1\alpha_2$ is a solution to $I_{\PCP}$ of the form $(Bx_1E)^{\pm1}$, $E^{-1}x_1E$, or $Bx_1B^{-1}$ for $x_1\in F(\Sigma)$.

We can now prove the result: the product $(\alpha_1x_1\alpha_2)^{-1}x'$ is also a solution to $I_{\PCP}$, and it decomposes as $x_2\alpha_3\cdots x_{n-1}\alpha_{n}x_n$. Applying the above argument, we get that $x_2$ is empty and that $\alpha_{3}x_3\alpha_4$ is a solution to $I_{\PCP}$ of the form $(Bx_3E)^{\pm1}$, $E^{-1}x_3E$, or $Bx_3B^{-1}$ for $x_3\in F(\Sigma)$. Repeatedly reducing the solution like this, we see that $x'$ decomposes as $x''x_n$ where $x''$ is a product of solutions of the form $(BxE)^{\pm1}$, $E^{-1}xE$, and $BxB^{-1}$ for $x\in F(\Sigma)$. The result then follows as $x_n$ is empty, which can be seen by applying the above argument to the solution $x'^{-1}$, which decomposes as $x_n\alpha_{n-1}\cdots$.
\qed\end{proof}

\begin{proof}[Lemma \ref{lem:Stallings}]
Suppose without loss of generality that $g$ is injective, and consider the homomorphism $\phi=h\circ g^{-1}: \im(g)\rightarrow F(\Delta)$.
Then $g(\eq(g, h))$ is precisely the fixed subgroup $\fix(\phi)=\{x\in \im(g)\mid \phi(x)=x\}$, and as $g$ is injective it restricts to an isomorphism between these two subgroups.
We therefore give an algorithm which takes as input $I=(\Sigma, \Delta, g, h)$ and the rank of $\eq(g, h)$ and outputs a basis $\mathcal{B}$ for $\fix(\phi)$; this is sufficient as $g^{-1}(\mathcal{B})$ is then an algorithmically computable basis for $\eq(g, h)$.
If $\rk(\eq(g, h))=0$ then the basis is the empty set, so we may assume $\rk(\eq(g, h))\geq1$ (this reduction is not necessary, but not doing so introduces certain subtleties).

Let $\Gamma_{\phi}$ be the the union of those loops in $D_{\phi}$ which contain no degree-$1$ vertices, but contain the vertex ${\epsilon}$; this is simply the component of the core graph of $D_{\phi}$ which contains the specified vertex.
As labels of loops in $D_{\phi}$ correspond to elements of $\fix(\phi)$, any basis of the fundamental group $\pi_1(\Gamma_{\phi})$ corresponds to a basis of $\fix(\phi)$.
Now, a basis for $\pi_1(\Gamma_{\phi})$ can be computed via standard algorithms (see for example \cite[Propositions 6.7]{Kapovich2002Stallings}), and therefore to prove the result we only need to construct the graph $\Gamma_{\phi}$.

Start with vertex set $V$ consisting of a single vertex ${\epsilon}$ corresponding to the empty word.
Now enter a loop, terminating when $|E|-|V|=\rk(\eq(g, h))-1$, as follows:
For each $v\in V$ and each $i\in \{1, \ldots, k\}$, add an edge to $E$ starting at $v$ and ending at the vertex corresponding to the element $\phi(\alpha_i)g\alpha_i^{-1}$ of $F(\Sigma)$; if there is no such vertex in $V$ then first add one to $V$.
When the loop terminates, prune the resulting graph to obtain a graph $\Gamma_{\phi}'$ by iteratively removing all degree-$1$ vertices.

Note that $\rk(\eq(g, h))$ is known, so we can determine if a graph satisfies $|E|-|V|=\rk(\eq(g, h))-1$; in particular, at each iteration in the looping procedure we know whether to continue or to terminate the loop.

There are two things to prove.
Firstly, that the looping procedure terminates (and so the above is actually an algorithm), and secondly that the terminating graph $\Gamma_{\phi}'$ is in fact the graph $\Gamma_{\phi}$.
So, note that the procedure constructs \emph{some} subgraph of $D_{\phi}$ which contains the empty word as a vertex, which has no degree-$1$ vertices, and which satisfies $\#\text{edges}-\#\text{vertices}= \rk(\eq(g, h))-1$, or no such subgraph exists.
Now, the graph $\Gamma_{\phi}$ satisfies these conditions (because its fundamental group has rank $\rk(\eq(g, h))$), so the procedure does terminate.
Moreover, every loop in $D_{\phi}$ which contains the empty word is a loop in $\Gamma_{\phi}$, and so the subgraph constructed is in fact a subgraph of $\Gamma_{\phi}$.
The result then follows because no proper subgraph of $\Gamma_{\phi}$ satisfies $\#\text{edges}-\#\text{vertices}= \rk(\eq(g, h))-1$ (subgraphs in fact satisfy $\#\text{edges}-\#\text{vertices}\lneq \rk(\eq(g, h))-1$).
\qed\end{proof}
\end{document}